\documentclass[11pt]{article}
\usepackage[letterpaper,margin=1in]{geometry}
\usepackage{amsmath,amssymb,amsthm,mathtools}
\usepackage{mathrsfs}
\usepackage[dvipsnames]{xcolor}
\usepackage{graphicx}
\usepackage{microtype}
\usepackage{tikz}
\usetikzlibrary{arrows.meta,positioning,calc,fit,decorations.pathreplacing}
\usepackage{enumitem}
\usepackage{array}
\setlist{nosep,leftmargin=*}
\usepackage{float}
\usepackage{hyperref}
\hypersetup{
  colorlinks=true,
  linkcolor=blue!55!black,
  citecolor=blue!55!black,
  urlcolor=blue!55!black
}

\newtheorem{theorem}{Theorem}[section]
\newtheorem{lemma}[theorem]{Lemma}
\newtheorem{proposition}[theorem]{Proposition}
\newtheorem{corollary}[theorem]{Corollary}

\newtheorem{problem}[theorem]{Problem}

\theoremstyle{definition}
\newtheorem{definition}[theorem]{Definition}
\newtheorem{remark}[theorem]{Remark}
\newtheorem{example}[theorem]{Example}

\usepackage[nameinlink,capitalize]{cleveref}

\crefname{theorem}{Theorem}{Theorems}
\crefname{lemma}{Lemma}{Lemmas}
\crefname{proposition}{Proposition}{Propositions}
\crefname{corollary}{Corollary}{Corollaries}
\crefname{definition}{Definition}{Definitions}
\crefname{remark}{Remark}{Remarks}
\crefname{example}{Example}{Examples}
\crefname{problem}{Problem}{Problems}
\Crefname{theorem}{Theorem}{Theorems}
\Crefname{lemma}{Lemma}{Lemmas}
\Crefname{proposition}{Proposition}{Propositions}
\Crefname{corollary}{Corollary}{Corollaries}
\Crefname{definition}{Definition}{Definitions}
\Crefname{remark}{Remark}{Remarks}
\Crefname{example}{Example}{Examples}
\Crefname{problem}{Problem}{Problems}

\newcommand{\dist}{\operatorname{dist}}

\begin{document}

\title{Exact Leaf Powers on Cycles, Ladders, Crowns, and Multipartite Block Graphs}
\author{Peng Li\thanks{School of Mathematics and Statistics, Hanjiang Normal University, Shiyan 442000, China. Email: \texttt{lipeng1@hjnu.edu.cn}.}
\and Yangjing Long\thanks{School of Mathematics and Statistics, Central China Normal University, 152 Luoyu Road, Wuhan 430079, China. Email: \texttt{yangjing@mail.ccnu.edu.cn}. Corresponding author.}}
\date{}
\maketitle

\begin{abstract}
Exact $k$-leaf powers are graphs whose edges are exactly the pairs of leaves at distance $k$ in a tree.  We study exact leaf powers on several natural structural test classes which probe how induced squares, dense bipartite graphs, and multipartite blocks can be represented at fixed tree distances.  The common tool is the support-tree reduction: after deleting the graph leaves, exact $k$-leaf powers are precisely support-tree distance-$(k-2)$ graphs.

Our most detailed root-classification theorem concerns chordless cycles: we give a complete terminal block-language description of all exact $5$-leaf roots of $C_l$, $l\ge8$; the language has a normal terminal-$23$ part and a complementary terminal-$123$ part.  The other exact-distance-five results place this benchmark in a broader structural landscape.  We prove the sharp ladder dichotomy
\[
 L_t\text{ is an exact }5\text{-leaf power}\quad\Longleftrightarrow\quad t\le2,
\]
showing that sparse chains of induced squares are severely limited.  In contrast, dense bipartite square structures are often representable: among graphs whose blocks are complete multipartite, the exact $5$-leaf powers are exactly the bipartite members, and every bipartite co-cluster graph is an exact $k$-leaf power for every $k\ge5$.  Thus all crown graphs $K_{n,n}-M$ and the cube $Q_3$ occur at exact distance five.

We also obtain larger-distance consequences.  Complete multipartite graphs are exact $k$-leaf powers exactly when $k$ is even or the graph is bipartite.  Block-complete multipartite graphs are classified for all odd $k\ge5$ and represented for all even $k\ge8$; at the exceptional boundary $k=6$ we prove a sufficient orientation theorem and a complete theorem for multipartite block fans.  Stable blow-ups preserve exact $k$-leaf representability for every $k\ge3$, giving modular versions of the subclass classifications.
\end{abstract}

\noindent\textbf{Keywords:} exact leaf power; chordless cycle; ladder graph; crown graph; complete multipartite graph; support tree.

\noindent\textbf{2020 Mathematics Subject Classification:} 05C05; 05C38; 05C75; 05C85.

\bigskip

\section{Introduction}

Leaf powers encode graph adjacencies by distances between leaves of a tree.  In an ordinary $k$-leaf power, two leaves are adjacent when their distance is at most $k$; in an exact $k$-leaf power, adjacency is prescribed by the single value $k$.  Ordinary leaf powers were introduced by Nishimura, Ragde and Thilikos \cite{NRT2002}.  The cases $k=3$ and $k=4$ have structural and linear-time recognition theorems \cite{BLS2006,BLS2008}, and Lafond proved polynomial-time recognition for ordinary $k$-leaf powers for every fixed $k$ \cite{Lafond2022,Lafond2023}.  Recent work has also considered restricted root shapes \cite{BergougnouxHogemoTelleVatshelle2022}, optimal leaf roots \cite{LeRosenke2024}, leaf-rank lower bounds \cite{Hogemo2024}, forbidden-induced-subgraph limitations for ordinary $k$-leaf powers with $k\ge5$ \cite{DupreLafondNdiayeVetta2025}, and the complexity of unrestricted leaf-power and pairwise-compatibility-graph recognition \cite{DupreLafondNdiaye2026}.  Pairwise compatibility graphs form a related phylogenetic framework; see \cite{CalamoneriSinaimeri2016,CalamoneriMontiSinaimeri2025}.  A survey treatment of ordinary leaf powers appears in \cite{RosenkeLeBrandstaedt2021}.

Exact leaf powers were introduced by Brandstaedt, Le and Rautenbach \cite{BLR2010}.  Exactness changes the nature of the representation: one asks for a sphere in the leaf metric rather than a threshold ball.  Thus odd exact distances force bipartiteness at the support level, whereas even exact distances can realize multipartite behaviour.  The first exact cases, $k=3$ and $k=4$, are well understood \cite{BLR2010}; exact distance five is the first case in which cycles, induced squares, complete bipartite blocks, and nonunique roots interact in a nontrivial way.

This paper develops explicit structure theorems for exact leaf powers on representative graph families, with exact distance five as the main case.  The most detailed root-classification benchmark is the cycle theorem: for every chordless cycle $C_l$, $l\ge8$, all exact $5$-leaf roots are described by an explicit terminal block language.  The other results should be read as a structural landscape around this benchmark, showing which square patterns and multipartite blocks are representable and where the first larger-distance boundary occurs.  The common tool is a support-tree model.  If $T$ is an exact $k$-leaf root and the graph leaves are deleted, then each graph vertex $v$ has a support vertex $s_v$ in a tree $R$, and
\[
 uv\in E(G)\quad\Longleftrightarrow\quad \dist_R(s_u,s_v)=k-2.
\]
For $k=5$ this is a support-distance-three representation.  The results below show how the same simple metric mechanism leads to both obstructions and constructions.

The graph classes considered here are not isolated examples.  They form a small structural test set for exact-distance tree representations.  The chordless-cycle theorem is the most detailed root-level benchmark in the paper: it treats the basic nonchordal case and gives a finite terminal block language for all exact $5$-roots.  The other subclass theorems explain where this benchmark sits in the larger landscape.  Ladders are sparse chains of induced squares; they are rigid enough that only the first two ladders are representable.  Complete bipartite blocks and bipartite co-cluster graphs are dense bipartite families; despite containing many induced squares, they have compact support-tree gadgets.  Finally, complete multipartite block graphs reveal how the parity of the exact distance changes the picture, with an exceptional boundary at $k=6$.

\begin{center}
\small
\renewcommand{\arraystretch}{1.08}
\begin{tabular}{@{}>{\raggedright\arraybackslash}p{.30\textwidth}>{\raggedright\arraybackslash}p{.43\textwidth}>{\raggedright\arraybackslash}p{.20\textwidth}@{}}
\hline
Class & Structural role & Result type \\
\hline
Chordless cycles & basic nonchordal test objects & root language \\
Ladders $L_t$ & minimal chains of overlapping squares & threshold $t\le2$ \\
Complete-bipartite block graphs & dense square blocks glued at cutvertices & positive class \\
Co-cluster graphs; crowns & dense bipartite graphs with structured nonedges & positive class \\
Complete multipartite graphs & parity test for larger exact distances & all-$k$ dichotomy \\
Exact-six fans & first support-distance-four collision boundary & fan theorem \\
\hline

\end{tabular}
\end{center}

\begin{figure}[H]
\centering
\resizebox{.96\textwidth}{!}{%
\begin{tikzpicture}[>=Latex,
  every node/.style={font=\small},
  box/.style={draw,rounded corners,align=center,minimum width=3.0cm,minimum height=.85cm,fill=gray!6},
  role/.style={align=center,font=\scriptsize,text width=3.0cm}
]
\node[box] (cycles) at (0,1.8) {Chordless\\cycles};
\node[box] (ladders) at (4.1,1.8) {Ladders\\$L_t$};
\node[box] (blocks) at (8.2,1.8) {Complete-bipartite\\blocks};
\node[box] (multi) at (0,0) {Complete\\multipartite graphs};
\node[box] (crowns) at (4.1,0) {Crowns and\\co-clusters};
\node[box] (six) at (8.2,0) {Exact-six\\fans};
\node[role] at (0,2.65) {nonchordal roots};
\node[role] at (4.1,2.65) {sparse square chains};
\node[role] at (8.2,2.65) {dense square blocks};
\node[role] at (0,-.78) {larger-$k$ parity};
\node[role] at (4.1,-.78) {structured missing edges};
\node[role] at (8.2,-.78) {support-distance-four collisions};
\draw[->,thick] (cycles) -- (ladders);
\draw[->,thick] (ladders) -- (blocks);
\draw[->,thick] (multi) -- (crowns);
\draw[->,thick] (crowns) -- (six);
\draw[->,thick,dashed] (ladders) -- (crowns);
\draw[->,thick,dashed] (blocks) -- (six);
\end{tikzpicture}%
}
\caption{The structural test classes used in the paper.  Cycles and ladders probe nonchordal and sparse-square behaviour; complete-bipartite blocks and co-clusters give dense positive families; complete multipartite graphs and exact-six fans expose parity and boundary phenomena for larger exact distances.}
\label{fig:structural-map}
\end{figure}

Our main results are as follows.  The first item is the most detailed root-classification benchmark; the subsequent theorems show how the same exact-distance mechanisms separate sparse square-chain obstructions, dense bipartite positive families, larger-distance parity, and the even boundary at $k=6$.
\begin{enumerate}[label=\textup{(\roman*)},leftmargin=2.2em]
\item \textbf{Chordless cycles.}  This is the root-level classification benchmark of the paper.  We give a complete terminal block language for all exact $5$-leaf roots of $C_l$, $l\ge8$; see \Cref{thm:cycle-block-language}.  The normal terminal-$23$ language corresponds to the standard finite-block spine pattern, while the terminal-$123$ language accounts for the complementary endpoint configurations.

\item \textbf{Ladders and square chains.}  If $L_t$ is the $t$-square ladder, then
\[
 L_t \text{ is an exact }5\text{-leaf power}\quad\Longleftrightarrow\quad t\le2.
\]
The proof is a direct support-tree metric argument.  It also gives the induced square-chain obstruction: no exact $5$-leaf power contains an induced square-chain of length three.

\item \textbf{Multipartite block graphs.}  Within the class of connected graphs whose blocks are complete multipartite, exact distance five singles out the bipartite members:
\[
 G\text{ is an exact }5\text{-leaf power}\quad\Longleftrightarrow\quad G\text{ is bipartite}.
\]
Equivalently, every block is complete bipartite.  The constructive direction is given by one support edge per complete bipartite block, glued along cutvertices.

\item \textbf{Crowns and co-cluster graphs.}  Every bipartite co-cluster graph is an exact $k$-leaf power for every $k\ge5$.  This includes all crown graphs $K_{n,n}-M$ and shows, in particular, that $Q_3$ is an exact $5$-leaf power.

\item \textbf{Larger exact distances.}  Complete multipartite graphs have the all-$k$ classification
\[
 K_{n_1,\ldots,n_s}\text{ is exact }k
 \quad\Longleftrightarrow\quad
 k\text{ is even or }s\le2.
\]
For block-complete multipartite graphs, all odd distances $k\ge5$ behave like distance five, all even distances $k\ge8$ admit the whole class, and $k=6$ has a genuine short-shadow boundary.  We prove a sufficient exact-six orientation theorem and a complete exact-six classification for multipartite block fans.

\item \textbf{Stable blow-ups.}  For every $k\ge3$, stable false-twin blow-ups preserve exact $k$-leaf representability.  Hence the structural classifications above have immediate modular versions.
\end{enumerate}

These classifications are sharp in different directions.  Although every exact odd-leaf power is bipartite, bipartiteness alone is far from sufficient: the three-square ladder is already excluded.  On the other hand, the obstruction is not the mere presence of many induced squares.  Arbitrarily large complete bipartite blocks are representable, and so are crowns and more general bipartite co-cluster graphs.  Thus the sparse ladder obstruction and the dense positive families identify two genuinely different square mechanisms.  The larger-distance results add a parity layer, while the exact-six fan theorem explains why support-distance four is a real boundary: two short same-part shadows can collide across a cutvertex, a phenomenon that disappears in the even constructions for $k\ge8$.

In this sense the cycle theorem supplies the paper's most detailed root-level benchmark, while the remaining theorems turn the same support-distance viewpoint into a structural map of exact leaf powers on several minimal but representative graph families: sparse square chains, dense multipartite blocks, dense bipartite graphs with structured missing edges, and the first even-distance fan boundary.

The paper is organized as follows.  \Cref{sec:support} gives the support-tree reduction and basic closure facts.  \Cref{sec:ladders} proves the ladder dichotomy.  \Cref{sec:block-complete-bipartite,sec:cocluster} treat complete bipartite blocks and bipartite co-cluster graphs.  \Cref{sec:larger-k-blocks} contains the larger-distance multipartite results and the exact-six boundary.  \Cref{sec:blowups-boundaries} records stable blow-ups and odd-distance boundary subclasses.  \Cref{sec:cycle-correction} gives the terminal block-language theorem for chordless cycles.

\section{Support trees and reductions}\label{sec:support}

We work with finite simple undirected graphs.  A graph $G$ is an \emph{exact $k$-leaf power} if there is a tree $T$ whose leaf set is $V(G)$ and such that
\[
 uv\in E(G) \quad\Longleftrightarrow\quad \dist_T(u,v)=k.
\]
Such a tree $T$ is an exact $k$-leaf root.

\begin{definition}[support-distance representations]\label{def:support-distance}
Let $d\ge0$.  A \emph{support-tree distance-$d$ representation} of a graph $H$ is a tree $R$ together with a map
\[
 s:V(H)\to V(R)
\]
such that, for all distinct $u,v\in V(H)$,
\[
 uv\in E(H) \quad\Longleftrightarrow\quad \dist_R(s(u),s(v))=d.
\]
The case $d=3$ is called a support-tree distance-three representation.  If $H$ is false-twin-free, we often identify a vertex $u$ with its support $s(u)$.
\end{definition}

\begin{theorem}[support-distance model]\label{thm:support-model}
Let $k\ge2$.  A graph $G$ is an exact $k$-leaf power if and only if it has a support-tree distance-$(k-2)$ representation.
\end{theorem}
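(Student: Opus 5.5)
The plan is to prove both directions by one metric identity: if a leaf $u$ of $T$ has unique neighbour $s_u$, then for distinct leaves $u,v$ we have $\dist_T(u,v)=\dist_T(s_u,s_v)+2$, because the $u$--$v$ path leaves $u$ through $s_u$ and enters $v$ through $s_v$. Deleting or adding pendant leaves at the support level therefore shifts every leaf-to-leaf distance by exactly $2$, which turns $k$ into $d=k-2$. The only real care needed is with degenerate trees: the single vertex and the single edge $K_2$. For these, either the support map is not defined as intended, or leaves and internal vertices coincide.

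\emph{Forward direction.} Let $T$ be an exact $k$-leaf root of $G$. If $|V(G)|=1$, we take $R$ to be a single vertex. If $T=K_2$, then $G=2K_1$ because $k\ge2$; we take $R=K_2$ and map the two vertices to its two ends, so that the support distance is $1\ne 0$ when $k=2$. For $k\ge3$ the distance $1$ is also $\ne k-2$ unless $k=3$; in the case $k=3$ we map both vertices to the same vertex, giving distance $0\ne1$. In every other case $T$ has at least three vertices, so each leaf $u$ has a non-leaf neighbour $s_u$. We set $R=T-V(G)$, which is a tree because deleting leaves preserves connectivity, and $s(u)=s_u$. The path between two supports avoids all leaves, so $\dist_R(s_u,s_v)=\dist_T(s_u,s_v)$. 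The identity above then gives $uv\in E(G)\iff\dist_R(s(u),s(v))=k-2$.

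\emph{Backward direction.} Let $(R,s)$ be a distance-$(k-2)$ representation of $G$. If $|V(G)|=1$, a single vertex is a root, so assume $|V(G)|\ge2$. First we prune: we repeatedly delete leaves of $R$ that are not in $s(V(G))$. This does not change distances between image vertices, since such leaves lie on no path between image vertices, and it leaves a tree in which every leaf is in the image. Next, for each $v\in V(G)$ we attach a new pendant vertex $\ell_v$ to $s(v)$ and call the result $T$. Every vertex of the pruned $R$ now has degree at least $1$ in $R$ (or, if the pruned $R$ is a single vertex, it receives at least two pendants) plus at least one pendant if it is an image vertex. Hence every old vertex is internal, and the leaf set of $T$ is exactly $\{\ell_v\}$. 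The identity gives $\dist_T(\ell_u,\ell_v)=\dist_R(s(u),s(v))+2$, so after identifying $v$ with $\ell_v$ the tree $T$ is an exact $k$-leaf root.

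I expect the main obstacle to be bookkeeping rather than an idea: making sure the leaf set of the constructed $T$ is exactly $V(G)$. This requires that no unused leaf of $R$ survives, handled by the pruning step, and that no support vertex is itself a leaf of $T$. The small cases $K_2$ and $K_1$ must be treated separately in the forward direction. Non-injectivity of $s$ causes no trouble, since false twins simply receive distinct pendants at the same support.
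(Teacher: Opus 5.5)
Your proposal is correct and follows the paper's approach: delete the graph leaves to get $R$, or attach one pendant leaf per vertex to get $T$, and use the fact that this shifts every leaf-to-leaf distance by exactly $2$. Your pruning of unused leaves of $R$ and your separate handling of the one-vertex and $K_2$ roots fill in details that the paper's short proof leaves implicit. These details are genuinely needed to guarantee that the constructed tree has leaf set exactly $V(G)$.
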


\begin{proof}
Let $T$ be an exact $k$-leaf root of $G$.  Delete all graph leaves from $T$, and let $R$ be the resulting support tree.  If $u\in V(G)$, let $s(u)$ be the unique neighbour of the leaf $u$ in $T$.  Then, for distinct $u,v\in V(G)$,
\[
 \dist_T(u,v)=\dist_R(s(u),s(v))+2.
\]
Thus $uv\in E(G)$ if and only if $\dist_R(s(u),s(v))=k-2$.

Conversely, suppose that $R$ and $s$ form a support-tree distance-$(k-2)$ representation of $G$.  Attach a new leaf $\ell_u$ to $s(u)$ for every $u\in V(G)$, and identify $\ell_u$ with $u$ in the leaf-labelled tree.  Then
\[
 \dist_T(\ell_u,\ell_v)=\dist_R(s(u),s(v))+2,
\]
so the exact distance-$k$ leaf graph of $T$ is precisely $G$.
\end{proof}

\begin{lemma}[support reduction]\label{lem:support-reduction}
A graph $G$ is an exact $5$-leaf power if and only if it has a support-tree distance-three representation.
\end{lemma}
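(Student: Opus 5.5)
This is the special case $k=5$ of \Cref{thm:support-model}, so the plan is simply to instantiate that theorem. Since $5\ge2$, its hypothesis holds. With $k=5$ we get $k-2=3$, so a support-tree distance-$(k-2)$ representation is exactly a support-tree distance-three representation in the sense of \Cref{def:support-distance}. The equivalence then follows verbatim.

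For a self-contained check, I would retrace both directions. Given an exact $5$-leaf root $T$, delete the graph leaves to obtain the support tree $R$, and let $s(u)$ be the unique neighbour of leaf $u$. Every leaf-to-leaf path passes through the two support vertices, so
\[
\dist_T(u,v)=\dist_R(s(u),s(v))+2,
\]
and hence $uv\in E(G)$ if and only if $\dist_R(s(u),s(v))=3$.

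Conversely, given $(R,s)$, attach a pendant leaf $u$ to $s(u)$ for each vertex $u$. This produces a tree whose leaf set is $V(G)$, and the same distance identity holds in it.

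The only point needing care is degenerate cases. In $T$, the neighbour of a leaf could itself be a graph leaf; this happens only when $T$ is a single edge, that is, $|V(G)|=2$ with the two vertices at distance $1\ne5$. Then $G=\overline{K_2}$, which is handled directly: map both vertices to one support vertex. Similarly, in the converse direction, a support vertex of $R$ that is a leaf of $R$ but not in the image of $s$ must not become a graph leaf. I would fix this by first pruning such unused leaves of $R$ repeatedly, which changes no distances between images. These bookkeeping issues are the main (minor) obstacle, and they are already implicit in the proof of \Cref{thm:support-model}.
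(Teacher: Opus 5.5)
Your proposal is correct and is exactly the paper's argument: the lemma is the case $k=5$ of \Cref{thm:support-model}, and your retracing of both directions matches that theorem's proof. Your extra bookkeeping (the two-vertex root and the pruning of unused leaves of $R$) is sound and more careful than the paper. The one degenerate case it still omits is the one-vertex graph: there the root has no support neighbour, and attaching a pendant leaf to a one-vertex $R$ creates a second leaf. That case is settled directly by the one-vertex tree.
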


\begin{proof}
This is the case $k=5$ of \Cref{thm:support-model}.
\end{proof}

\begin{corollary}[induced-subgraph closure]\label{cor:induced-closure}
For $k\ge2$, every induced subgraph of an exact $k$-leaf power is an exact $k$-leaf power.
\end{corollary}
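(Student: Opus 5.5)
The plan is to prove the corollary by restricting a support-tree representation; working in the support model of \Cref{thm:support-model} avoids any surgery on the leaf root itself. Let $G$ be an exact $k$-leaf power and $H=G[X]$ an induced subgraph for some nonempty $X\subseteq V(G)$. By \Cref{thm:support-model}, $G$ has a support-tree distance-$(k-2)$ representation $(R,s)$. We will show that $(R,s|_X)$ is a support-tree distance-$(k-2)$ representation of $H$, and then apply the converse direction of \Cref{thm:support-model}.

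The verification is immediate from \Cref{def:support-distance}. The tree $R$ is unchanged, and the definition places no requirement that every vertex of $R$ be a support or that $R$ be minimal. Hence only the adjacency condition needs checking. Take distinct $u,v\in X$. Since $H$ is induced, $uv\in E(H)$ if and only if $uv\in E(G)$. This holds if and only if $\dist_R(s(u),s(v))=k-2$, which is exactly the required biconditional for $s|_X$.

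The only point needing care is that we do not try to delete the removed leaves from an exact root $T$ and then argue about the leftover tree. Deleting a leaf can leave an internal vertex of degree one, which would become a new, unlabelled leaf. Routing the argument through the support model sidesteps this issue. In the reverse construction of \Cref{thm:support-model}, new leaves are attached only for the vertices of $H$, and leaves of $R$ that are not supports are harmless. This is because exact leaf-root distances are measured only between the labelled leaves $\ell_u$, and the leaf set of the constructed tree must equal $V(H)$.

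To make that last point rigorous, I would prune unlabelled leaves of $R$ repeatedly. Removing a non-support leaf of $R$ does not change any distance between supports, since paths between support vertices never pass through it. After pruning, every leaf of $R$ is a support vertex. Consequently, in the tree obtained by attaching the $\ell_u$, the leaf set is exactly $\{\ell_u : u\in X\}$. This housekeeping is the only nontrivial step; everything else is immediate.
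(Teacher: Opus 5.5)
Your proposal is correct and follows the paper's argument: restrict a support-tree distance-$(k-2)$ representation to the chosen vertex set, then reapply \Cref{thm:support-model}. Your pruning of non-support leaves of $R$ adds care that the paper's converse construction leaves implicit. One small caveat: the claim that the leaf set becomes exactly $\{\ell_u : u\in X\}$ needs $|X|\ge2$, and the case $|X|=1$ is settled by the one-vertex tree.
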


\begin{proof}
Restrict a support-tree distance-$(k-2)$ representation of $G$ to the chosen vertex set.  The same tree and the restricted support map represent the induced subgraph.
\end{proof}

A connected nontrivial exact odd-leaf power is bipartite: the supports in a tree have a bipartition, and odd support distances go across this bipartition.  In particular, exact $5$-leaf powers are bipartite.  False twins may share the same support.  In most structural statements below we work with connected false-twin-free bipartite graphs; false-twin stable modules can be expanded afterwards by attaching several leaves to the same support vertex.

\begin{definition}[stable false-twin reduction]
Two distinct vertices $u,v$ of a graph are \emph{false twins} if they are nonadjacent and
\[
 N_H(u)=N_H(v).
\]
A graph is \emph{false-twin-free} if no such pair exists.  The \emph{false-twin reduction} of $H$ is obtained by contracting each maximal stable false-twin class to one representative.
\end{definition}

\begin{proposition}[component and false-twin reduction]\label{prop:reduction}
A graph $G$ is an exact $5$-leaf power if and only if each connected component of its false-twin reduction is an exact $5$-leaf power.  Moreover, any exact $5$-leaf root of the reduced graph can be expanded to a root of $G$ by attaching all vertices in a false-twin class to the same support vertex.
\end{proposition}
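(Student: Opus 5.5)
The plan is to prove the two directions separately, working throughout in the support-tree language of \Cref{lem:support-reduction}. The forward direction is immediate from closure under induced subgraphs. The backward direction needs two constructions: one that blows up false-twin classes, and one that glues the support trees of the components together.

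\emph{Necessity.} Choose one representative from each maximal stable false-twin class. Being false twins is an equivalence relation on $V(G)$: if $N(u)=N(v)$ and $N(v)=N(w)$, then $N(u)=N(w)$, and twins are nonadjacent, so each class is stable. Hence the false-twin reduction is, up to isomorphism, the subgraph of $G$ induced on these representatives. Each connected component of this reduction is in turn an induced subgraph of $G$. By \Cref{cor:induced-closure}, if $G$ is an exact $5$-leaf power, so is every such component.

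\emph{Sufficiency, step 1 (gluing components).} Let $H_1,\dots,H_m$ be the components of the reduction, with support-tree distance-three representations $(R_i,s_i)$. I would form a single tree $R$ by choosing a vertex $r_i\in V(R_i)$ for each $i$ and joining $r_i$ to $r_{i+1}$ by a new path of length $4$. For $u\in V(H_i)$ and $v\in V(H_j)$ with $i<j$, the unique path from $s_i(u)$ to $s_j(v)$ passes through $r_i$ and $r_j$. Its length is therefore at least $4(j-i)\ge4$, so it is never $3$. This matches the absence of edges between different components. Distances inside each $R_i$ are unchanged, because $R_i$ is a subtree of $R$. Thus $R$ with the combined map $s$ represents the whole reduction.

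\emph{Sufficiency, step 2 (expanding twin classes).} For each $x\in V(G)$ with class representative $\bar x$, set $s(x)=s(\bar x)$. Two vertices $x,y$ in the same class get the same support, so their support distance is $0\ne3$; this matches their nonadjacency. For $x,y$ in different classes, the adjacency of $x$ and $y$ equals that of $\bar x$ and $\bar y$, because twins have identical neighbourhoods. The support distance is also the same as for $\bar x,\bar y$, so the representation condition is inherited. Applying \Cref{lem:support-reduction} then gives an exact $5$-leaf root of $G$. By the construction in \Cref{thm:support-model}, this root is obtained from any root of the reduced graph exactly by attaching every vertex of a false-twin class as a leaf at the common support vertex, which is the ``moreover'' statement.

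The only step needing real care is the gluing of step 1. One must make sure that no support in one component can reach a support in another at distance exactly $3$, and using connecting paths of length at least $4$ settles this at once. Everything else reduces directly to the earlier results.
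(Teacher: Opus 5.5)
Your proof is correct and follows the same plan as the paper: necessity by restriction to induced subgraphs, gluing the component support trees with connecting paths long enough that no cross-component support distance equals $3$ (your length $4$ suffices), and expanding each false-twin class by giving all its members the support of its representative. Your necessity argument via \Cref{cor:induced-closure} is, if anything, cleaner than the paper's remark about collapsing vertices that share a support, since false twins of $G$ need not share a support in a given root.
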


\begin{proof}
Let $T$ be an exact $5$-leaf root and let $R$ be its support tree.  If two graph leaves have the same support vertex in $R$, then their distance in $T$ is $2$, so they are nonadjacent; and their distances to every other graph leaf are equal, so they have the same open neighbourhood.  Thus vertices sharing a support form a stable false-twin class.  Collapsing such a class to one representative preserves all distances from that class to the rest of the graph, and hence preserves exact $5$-representability.

Conversely, suppose a reduced graph has an exact $5$-leaf root.  For each false-twin class of the original graph, attach all leaves of that class to the support vertex of its representative.  Leaves attached to the same support are mutually at distance $2$, hence nonadjacent, and they have identical distances to all other leaves.  Thus the expanded tree represents the original graph.

Finally, disjoint unions are handled componentwise.  If several components have support trees, join these support trees by paths long enough that the support distance between vertices in different components is never $3$.  Then no cross-component leaf distance is $5$.  The reverse direction is immediate because induced subtrees on connected components give representations of the components.
\end{proof}

\section{Ladders and square-chain subgraphs}\label{sec:ladders}

The standard ladder family is the smallest place where overlapping induced squares create a genuine obstruction.  The sharp threshold is illustrated in \Cref{fig:ladder-threshold}.  We give a direct support-tree proof of its exact-$5$ classification.  The negative direction uses only the uniqueness of the exact $5$-leaf root of the domino.

\begin{definition}[$t$-square ladders]\label{def:ladder}
For $t\ge1$, let $L_t$ be the bipartite graph with parts
\[
 X_t=\{x_0,x_1,\ldots,x_t\},\qquad
 Y_t=\{y_0,y_1,\ldots,y_t\},
\]
and with
\[
 x_i y_j\in E(L_t)
 \quad\Longleftrightarrow\quad |i-j|\le1.
\]
Equivalently, $L_t$ is the usual ladder $P_{t+1}\square K_2$, written in the bipartite labelling in which the $i$th square is
\[
 Q_i=x_{i-1}-y_{i-1}-x_i-y_i-x_{i-1},
 \qquad i=1,\ldots,t.
\]
Thus $L_1=C_4$ and $L_2$ is the domino.

\end{definition}

\begin{figure}[H]
\centering
\begin{tikzpicture}[scale=.86, every node/.style={font=\scriptsize}, v/.style={circle,draw,inner sep=1.2pt,fill=white}]
\begin{scope}[shift={(0,0)}]
\node at (.55,1.35) {$L_1=C_4$};
\node[v,label=above:$x_0$] (a0) at (0,.45) {};
\node[v,label=above:$x_1$] (a1) at (1.1,.45) {};
\node[v,label=below:$y_0$] (b0) at (0,-.45) {};
\node[v,label=below:$y_1$] (b1) at (1.1,-.45) {};
\draw (a0)--(b0)--(a1)--(b1)--(a0);
\draw (a0)--(b1) (a1)--(b0);
\node[font=\small] at (.55,-1.2) {representable};
\end{scope}
\begin{scope}[shift={(3.2,0)}]
\node at (1.1,1.35) {$L_2$ = domino};
\foreach \i in {0,1,2}{
  \node[v,label=above:$x_\i$] (x\i) at (\i*1.1,.45) {};
  \node[v,label=below:$y_\i$] (y\i) at (\i*1.1,-.45) {};
  \draw (x\i)--(y\i);
}
\draw (x0)--(y1) (x1)--(y0) (x1)--(y2) (x2)--(y1);
\node[font=\small] at (1.1,-1.2) {representable};
\end{scope}
\begin{scope}[shift={(7.6,0)}]
\node at (1.65,1.35) {$L_3$};
\foreach \i in {0,1,2,3}{
  \node[v,label=above:$x_\i$] (u\i) at (\i*1.1,.45) {};
  \node[v,label=below:$y_\i$] (v\i) at (\i*1.1,-.45) {};
  \draw (u\i)--(v\i);
}
\draw (u0)--(v1) (u1)--(v0) (u1)--(v2) (u2)--(v1) (u2)--(v3) (u3)--(v2);
\node[font=\small] at (1.65,-1.2) {forbidden};
\end{scope}
\end{tikzpicture}
\caption{The sharp ladder threshold.  The first square $L_1$ and the domino $L_2$ have exact $5$-leaf roots, but the three-square ladder $L_3$ is already forbidden.  Hence no longer ladder is representable, since it contains an induced $L_3$.}
\label{fig:ladder-threshold}
\end{figure}

We shall use the following support form of the unique domino root.  It is just the exact-$5$ domino root of Brandstaedt, Le and Rautenbach \cite[Lemma~6(iii)]{BLR2010}, after deleting the six graph leaves.  We state it with labels because the overlap argument for ladders depends on the boundary vertices.

\begin{lemma}[domino support form]\label{lem:domino-support-form}
Let $D$ be the domino with bipartition
\[
 A=\{a_0,a_1,a_2\},\qquad B=\{b_0,b_1,b_2\},
\]
and edges $a_i b_j$ exactly when $|i-j|\le1$.  In every support-tree distance-three representation of $D$, exactly one of the following two symmetric alternatives holds, up to reversing the order $0,1,2$.
\begin{enumerate}[label=\textup{(\alph*)}]
\item There is a vertex $c$ adjacent to $s_{a_0},s_{a_1},s_{a_2}$, a vertex $d$ adjacent to $c$ and to $s_{b_1}$, and the two boundary supports satisfy
\[
 s_{b_0}s_{a_2}\in E(R),\qquad s_{b_2}s_{a_0}\in E(R).
\]
We call this the $A$-oriented support form.
\item The symmetric $B$-oriented form holds, with the roles of $A$ and $B$ interchanged.
\end{enumerate}
\end{lemma}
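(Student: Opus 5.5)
The plan is to do a metric case analysis in the support tree $R$, building the representation up from the squares of $D$. Three preliminary facts come first. The domino is connected and false-twin-free: $N(a_0)=\{b_0,b_1\}$, $N(a_1)=B$, $N(a_2)=\{b_1,b_2\}$, and symmetrically for $B$. Vertices with a common support are false twins, because they are nonadjacent and have equal distances to every other support. Hence the six supports are pairwise distinct, and we identify each vertex with its support. Second, all distances within $A$ and within $B$ are even. Third, every nonadjacent cross pair, namely $a_0b_2$ and $a_2b_0$, has odd distance different from $3$, so it has distance $1$ or at least $5$.

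The first step is a lemma classifying distance-three representations of a single $C_4=K_{2,2}$ with parts $\{u,u'\}$ and $\{w,w'\}$. Apply the four-point condition to $u,u',w,w'$. The two cross sums $d(u,w)+d(u',w')$ and $d(u,w')+d(u',w)$ both equal $6$. Hence the remaining sum satisfies $d(u,u')+d(w,w')\le 6$, with both terms even and positive.

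If the sum is $4$, then $u,u'$ have a common neighbour $c$, the vertices $w,w'$ have a common neighbour $d$, and $cd\in E(R)$.

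If the sum is $6$, all three sums coincide and the four points hang off a common centre $m$. Distance parity then forces a star shape. Either $m$ is a common neighbour of $u,u'$ and $w,w'$ are at distance $2$ from $m$ through different branches, or the symmetric configuration with the roles of the parts swapped holds. A short check on the branch lengths should rule out every other way of splitting the distances.

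The second step applies this lemma to the two squares $Q_1=\{a_0,a_1,b_0,b_1\}$ and $Q_2=\{a_1,a_2,b_1,b_2\}$, which share the edge $a_1b_1$. We then impose the two non-edge constraints $d(a_2,b_0)\ne3$ and $d(a_0,b_2)\ne3$, together with $d(a_0,b_1)=d(a_2,b_1)=3$.

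The intended outcome is as follows. In one square the $A$-pair shares a neighbour $c$. Then $a_1$'s neighbour $c$ must also serve as the shared neighbour of $a_0$ and $a_2$, since both are at distance $3$ from $b_1$ and at even distance from $a_1$. The vertex $b_1$ is then at distance $2$ from $c$ through some neighbour $d$ of $c$. Now $b_0$ is at distance $3$ from both $a_0$ and $a_1$ but not from $a_2$. Placing $b_0$ at distance $2$ from $c$ via a branch other than $a_2$'s would give distance $3$ to $a_2$, so $b_0$ must hang directly off $a_2$, that is, $b_0a_2\in E(R)$. Symmetrically we get $b_2a_0\in E(R)$. This is form (a). When instead the $B$-pairs share the neighbour, the same argument with roles swapped gives form (b).

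To finish, we check that forms (a) and (b) cannot hold simultaneously; for instance, (a) forces $d(a_0,a_2)=2$ and $d(b_0,b_2)=6$. We also verify that (a) is indeed a representation: $b_0$ reaches $a_0,a_1$ through the path $a_2\,c$ in exactly $3$ steps, and $b_1$ reaches every $a_i$ through the path $d\,c$.

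The main obstacle is the exhaustive case analysis in the second step. The two squares may a priori use different shapes from the $C_4$ lemma, for example $Q_1$ in a star shape centred at an $A$-neighbour and $Q_2$ in the $cd$ shape. Each mixed combination must be shown either to collapse to (a) or (b) or to violate one of the constraints $d(a_2,b_0)\ne3$, $d(a_0,b_2)\ne3$, or the distinctness of supports. I would organise this by the position of the median of $a_0,a_1,a_2$ in $R$ and by where $b_1$ attaches relative to it. Alternatively, I would simply invoke the uniqueness of the domino root from \cite[Lemma~6(iii)]{BLR2010} and translate it through \Cref{thm:support-model}, deleting the six graph leaves.
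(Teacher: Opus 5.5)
Your preliminary facts and the three-shape classification of a distance-three $C_4$ are correct. The gap is in the gluing step, which is where the whole content of the lemma lies. You assert that the common neighbour $c$ of $a_1$ must also be adjacent to $a_0$ and $a_2$ ``since both are at distance $3$ from $b_1$ and at even distance from $a_1$''. That is not a valid deduction. Write $a_1pqb_1$ for the path from $a_1$ to $b_1$. A support at distance two from $q$, through a neighbour of $q$ other than $p$ and $b_1$, also has distance $3$ to $b_1$ and even distance $4$ to $a_1$. This is exactly the third shape of your $C_4$ lemma applied to $Q_2$. Ruling out such mixed pairs of shapes, and the shape in which both pairs share neighbours, is precisely what remains. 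You leave it as ``the main obstacle'', so the direct route is incomplete as written.

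The missing idea is that the two non-edges settle every case at once. Read your three shapes along $a_1pqb_1$. The shape with both pairs sharing neighbours gives $a_0\in N(p)$ and $b_0\in N(q)$. The star centred at $p$ gives $a_0\in N(p)$ and $b_0\notin N(q)$. The star centred at $q$ gives $a_0\notin N(p)$ and $b_0\in N(q)$. So $Q_1$ satisfies at least one of $a_0\in N(p)$, $b_0\in N(q)$, and $Q_2$ satisfies at least one of $a_2\in N(p)$, $b_2\in N(q)$. However, $a_i\in N(p)$ together with $b_j\in N(q)$ makes $a_ipqb_j$ a path of length $3$, which the non-edges $a_0b_2$ and $a_2b_0$ forbid.

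Hence $a_0\in N(p)$ forces $b_2\notin N(q)$, then $a_2\in N(p)$, then $b_0\notin N(q)$, so both squares are stars centred at $p$. Now $b_0$ lies at distance two from $p$ through some neighbour $q_0\neq q$. If $q_0\neq a_2$, then $a_2pq_0b_0$ would have length $3$; so $\dist_R(a_2,b_0)\neq3$ forces $q_0=a_2$. Similarly $b_2a_0\in E(R)$, and this is form (a) with $c=p$, $d=q$. The case $b_0\in N(q)$ gives (b) symmetrically.

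There is also a minor slip. In (a), $\dist_R(b_0,b_2)=4$ (path $b_0a_2ca_0b_2$), not $6$. Exclusivity still holds, because $\dist_R(a_0,a_2)$ is $2$ in (a) and $4$ in (b).

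Your fallback is exactly the paper's proof: cite the uniqueness of the domino root from Lemma~6(iii) of Brandstaedt--Le--Rautenbach, delete the graph leaves, and check that (a) represents $D$. With the argument above filled in, your direct route would instead give a self-contained derivation that does not depend on that citation.
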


\begin{proof}
Take an exact $5$-leaf root of $D$ and delete its graph leaves.  By \cite[Lemma~6(iii)]{BLR2010}, the domino has a unique exact $5$-leaf root, up to graph automorphisms.  Its support tree is precisely the tree described in (a), together with its images under the two symmetries of the labelled domino: reversal of the order $0,1,2$ and interchange of the two bipartition classes.  Conversely, the tree in (a) represents the domino because
\[
 \dist_R(s_{a_i},s_{b_j})=3
 \quad\Longleftrightarrow\quad |i-j|\le1,
\]
and no two supports from the same side are at distance three.  This proves the stated labelled alternatives.
\end{proof}

\begin{theorem}[ladders]\label{thm:ladder}
For $t\ge1$, the ladder $L_t$ is an exact $5$-leaf power if and only if
\[
 t\le2.
\]
\end{theorem}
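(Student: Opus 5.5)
We handle the two directions separately. The negative direction reduces to the single graph $L_3$ and uses the rigidity of the domino root from \Cref{lem:domino-support-form}.

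\emph{Positive direction.} \Cref{lem:domino-support-form} exhibits an explicit support tree (form (a)) representing the domino $L_2$. Together with \Cref{lem:support-reduction}, this shows that $L_2$ is an exact $5$-leaf power. The square $L_1=C_4$ is the induced subgraph of $L_2$ on $\{x_0,x_1,y_0,y_1\}$, so it is covered by \Cref{cor:induced-closure}.

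\emph{Reduction of the negative direction.} For $t\ge3$, the vertices with indices $0,\dots,3$ induce a copy of $L_3$ in $L_t$. By \Cref{cor:induced-closure} it therefore suffices to show that $L_3$ has no support-tree distance-three representation. Suppose $R,s$ is such a representation. Then $L_3$ contains two induced dominoes:
\begin{itemize}
\item $D_1$ on the indices $0,1,2$;
\item $D_2$ on the indices $1,2,3$.
\end{itemize}
We apply \Cref{lem:domino-support-form} to each of them.

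\emph{Step 1: the orientations of $D_1$ and $D_2$ agree.} In the $A$-oriented form, the three $A$-supports are pairwise at distance $2$ through the common neighbour $c$. On the $B$ side, every pair consisting of the middle support and a boundary support is at distance $4$. For example,
\[
 s_{b_1}\,d\,c\,s_{a_0}\,s_{b_2}
\]
is a path of length $4$. This holds whichever reversal is used. The shared pair $x_1,x_2$ is such a same-side pair in both dominoes, and so is the shared pair $y_1,y_2$. Hence, if $D_1$ is $X$-oriented, then $\dist(s_{x_1},s_{x_2})=2$ and $\dist(s_{y_1},s_{y_2})=4$. This forces $D_2$ to be $X$-oriented as well. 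The $Y$-oriented case is symmetric.

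\emph{Step 2: the centres coincide.} Assume both dominoes are $X$-oriented. Two tree vertices at distance $2$ have a unique common neighbour. Since $s_{x_1}$ and $s_{x_2}$ are at distance $2$, the centre $c$ of $D_1$ equals the centre $c'$ of $D_2$. Consequently $c$ is adjacent to all of $s_{x_0},\dots,s_{x_3}$.

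\emph{Step 3: case analysis on reversals.} There are four cases, according to whether each of $D_1,D_2$ uses the order $0,1,2$ or its reversal. Suppose $D_1$ is in the unreversed form, so that
\[
 s_{y_0}s_{x_2}\in E(R),\qquad s_{y_2}s_{x_0}\in E(R),\qquad \dist(s_{y_1},c)=2 \text{ via } d.
\]
Suppose $D_2$ is also unreversed. Its boundary condition gives $s_{y_1}s_{x_3}\in E(R)$. Then $s_{y_1}$ reaches $c$ both through $d$ and through $s_{x_3}$. Uniqueness of paths in a tree forces $d=s_{x_3}$. But then
\[
 \dist(s_{y_0},s_{x_3})=\dist(s_{y_0},s_{x_2})+\dist(s_{x_2},c)+\dist(c,s_{x_3})=3,
\]
whereas $y_0x_3$ is a nonedge of $L_3$. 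This is a contradiction.

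The other three reversal combinations are treated the same way. In each one, the boundary edges of $D_1$ and $D_2$ attach $Y$-supports directly to $X$-supports next to $c$. Uniqueness of paths then identifies some $d$-vertex with an $X$-support. This produces a support distance $3$ between a nonadjacent pair, namely $y_0x_3$ or $y_3x_0$, or a distance $\neq3$ for an edge.

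\emph{Main obstacle.} We expect the main work to be Step 3: carrying out the bookkeeping of the four reversal cases without missing the possibility that some $d$ coincides with an existing support. The first tool in each case is uniqueness of paths at distance $2$ in $R$.
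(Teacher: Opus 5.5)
Your proposal is correct and follows the paper's proof: both force the two overlapping dominoes to have the same orientation via the shared pair $y_1,y_2$ (support distance $4$ versus $2$), identify the centres by uniqueness of the common neighbour of $s_{x_1},s_{x_2}$, and derive the forbidden value $\dist_R(s_{y_0},s_{x_3})=3$. Your Step~3 case split, which you flagged as the main obstacle, is vacuous, because the $A$-oriented form is invariant under reversing $0,1,2$ (reversal just swaps the two boundary edges $s_{b_0}s_{a_2}$ and $s_{b_2}s_{a_0}$); the paper therefore concludes directly after Step~2 from $s_{y_0}\sim s_{x_2}\sim c\sim s_{x_3}$, without needing $d=s_{x_3}$.
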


\begin{proof}
By Lemma~\ref{lem:support-reduction}, it is enough for the positive cases to give support-tree distance-three representations.

For $L_1=K_{2,2}$, take a path $a b$ and attach the two $X$-support vertices $s_{x_0},s_{x_1}$ to $a$ and the two $Y$-support vertices $s_{y_0},s_{y_1}$ to $b$.  Then every $s_{x_i}$ is at distance three from every $s_{y_j}$, while the same-side distances are two.  Hence the exact distance-three graph is $K_{2,2}=L_1$.

For $L_2$, take a support tree with vertices
\[
 s_{x_0},s_{x_1},s_{x_2},\quad s_{y_0},s_{y_1},s_{y_2},\quad c,d
\]
and edges
\[
 cs_{x_0},\ cs_{x_1},\ cs_{x_2},\ cd,\ ds_{y_1},\ s_{x_2}s_{y_0},\ s_{x_0}s_{y_2}.
\]
Then
\[
 \dist(s_{x_i},s_{y_j})=3
 \quad\Longleftrightarrow\quad |i-j|\le1,
\]
for $i,j\in\{0,1,2\}$, and no same-side pair is at distance three.  Therefore the represented graph is $L_2$.

It remains to exclude $t\ge3$.  By Corollary~\ref{cor:induced-closure}, it is enough to exclude $L_3$.  Suppose, for a contradiction, that $L_3$ has a support-tree distance-three representation $R$.

Let $D_L$ be the induced domino on
\[
 \{x_0,x_1,x_2,y_0,y_1,y_2\},
\]
and let $D_R$ be the induced domino on
\[
 \{x_1,x_2,x_3,y_1,y_2,y_3\}.
\]
The restriction of $R$ to the supports of $D_L$ is a support representation of a domino, and similarly for $D_R$.  Hence Lemma~\ref{lem:domino-support-form} applies to both.

Assume first that $D_L$ is $X$-oriented.  Thus $s_{x_0},s_{x_1},s_{x_2}$ have a common neighbour $c$, while the boundary support $s_{y_0}$ is adjacent to $s_{x_2}$ and the boundary support $s_{y_2}$ is adjacent to $s_{x_0}$, up to reversing the left domino.  The two shared $Y$-supports $s_{y_1}$ and $s_{y_2}$ are then at distance four in the left domino support form.  Hence $D_R$ cannot be $Y$-oriented: in a $Y$-oriented support form the shared supports $s_{y_1}$ and $s_{y_2}$ would have a common neighbour and hence distance two, contradicting uniqueness of the path between them in the tree.  Thus $D_R$ is also $X$-oriented.

In the $X$-oriented form of $D_R$, the supports $s_{x_1}$ and $s_{x_2}$ have a common neighbour.  In a tree, two vertices at distance two have a unique common neighbour; since their common neighbour in $D_L$ is $c$, the common neighbour in $D_R$ is again $c$.  Therefore $s_{x_3}$ is adjacent to $c$.  But $s_{y_0}$ is adjacent to $s_{x_2}$ and $s_{x_2}$ is adjacent to $c$, so
\[
 \dist_R(s_{x_3},s_{y_0})=3.
\]
This would make $x_3y_0$ an edge of the represented graph, whereas $|3-0|>1$ and hence $x_3y_0\notin E(L_3)$.  This contradiction excludes the case that $D_L$ is $X$-oriented.

The case in which $D_L$ is $Y$-oriented is symmetric.  Then $D_R$ is forced to be $Y$-oriented, the shared neighbour of $s_{y_1}$ and $s_{y_2}$ is the same in both dominoes, and the boundary path forces
\[
 \dist_R(s_{x_0},s_{y_3})=3,
\]
contradicting the nonedge $x_0y_3$.  Thus $L_3$ is not an exact $5$-leaf power.  Since $L_t$ contains an induced $L_3$ for every $t\ge3$, no such $L_t$ is an exact $5$-leaf power.
\end{proof}

\begin{definition}[induced square-chains]\label{def:induced-square-chain}
An \emph{induced square-chain of length $t$} in a graph $G$ is an induced subgraph of $G$ isomorphic to the ladder $L_t$.  Equivalently, it is a sequence of $t$ induced squares in which consecutive squares share exactly one edge and the union has no additional edges.
\end{definition}

\begin{corollary}[induced square-chain obstruction]\label{cor:square-chain-obstruction}
No exact $5$-leaf power contains an induced square-chain of length three.  Equivalently, every induced square-chain in an exact $5$-leaf power has length at most two.
\end{corollary}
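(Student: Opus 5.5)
The corollary is a direct consequence of \Cref{thm:ladder} combined with \Cref{cor:induced-closure}; it is essentially a restatement of the theorem in induced-subgraph language.

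First I would fix an exact $5$-leaf power $G$ and suppose, for a contradiction, that $G$ contains an induced square-chain of length $t\ge3$. By \Cref{def:induced-square-chain}, this means some induced subgraph $H$ of $G$ is isomorphic to $L_t$. Since $t\ge3$, the ladder $L_t$ contains an induced copy of $L_3$, namely the subgraph on
\[
 \{x_0,x_1,x_2,x_3,y_0,y_1,y_2,y_3\}.
\]
Here adjacency is $x_iy_j\in E$ exactly when $|i-j|\le1$, and this rule is inherited unchanged by the restriction. Composing the two inclusions, $G$ then contains an induced subgraph isomorphic to $L_3$.

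Next, \Cref{cor:induced-closure} with $k=5$ shows that this induced $L_3$ is itself an exact $5$-leaf power. This contradicts the negative direction of \Cref{thm:ladder}, which says $L_t$ is not an exact $5$-leaf power for $t\ge3$. So $G$ contains no induced square-chain of length three, and therefore none of length at least three.

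The ``equivalently'' clause is the contrapositive phrasing: every induced square-chain $L_t$ appearing in $G$ has $t\le2$. This bound is attained, since $L_1$ and $L_2$ are themselves exact $5$-leaf powers by \Cref{thm:ladder}. There is no real obstacle here. The only point needing a moment's care is the equivalence in \Cref{def:induced-square-chain} between ``$t$ consecutive induced squares sharing one edge with no extra edges'' and ``induced copy of $L_t$''. I would simply work with the induced-$L_t$ formulation, which is the one the definition states first.
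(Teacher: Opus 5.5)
Your proposal is correct and is essentially the paper's own argument: pass from an induced square-chain of length $t\ge3$ to an induced $L_3$, apply \Cref{cor:induced-closure} with $k=5$, and contradict \Cref{thm:ladder}. Your choice to work only with the induced-$L_t$ formulation of \Cref{def:induced-square-chain} is necessary, not merely convenient. The informal ``sequence of squares'' description in that definition also admits the three faces of $Q_3$ through a common vertex, which span seven vertices and so are not an induced $L_3$. Since $Q_3$ is an exact $5$-leaf power by \Cref{cor:crowns-cube}, the statement would fail under that informal reading.
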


\begin{proof}
If an exact $5$-leaf power $G$ contained an induced square-chain of length three, then $G$ would contain an induced subgraph isomorphic to $L_3$.  By Corollary~\ref{cor:induced-closure}, this induced subgraph would itself be an exact $5$-leaf power, contradicting \Cref{thm:ladder}.  An induced square-chain of length $t\ge3$ contains an induced subchain of length three, so the equivalent formulation follows.
\end{proof}

\begin{remark}
The proof above isolates the local reason for the ladder obstruction.  One domino has a unique support form, but two overlapping dominoes force the same orientation through their common square.  The forced boundary attachment of the second domino then creates an exact-distance edge across the outer boundary of the three-square ladder.
\end{remark}

\section{Complete-bipartite block graphs}\label{sec:block-complete-bipartite}

The ladder theorem excludes long sparse square chains.  Dense square blocks behave differently.  Complete bipartite blocks may contain many induced squares, but all of those squares share the same two sides and can be represented by one support edge.  This gives a second complete subclass theorem, complementary to the ladder obstruction.

\begin{definition}[block-complete multipartite graphs]
A connected graph $G$ is \emph{block-complete multipartite} if every block of $G$ is a complete multipartite graph.  It is \emph{block-complete bipartite} if every block is a complete bipartite graph, with $K_2=K_{1,1}$ allowed.
\end{definition}

\begin{figure}[H]
\centering
\begin{tikzpicture}[scale=.82, every node/.style={font=\small}, sup/.style={circle,draw,inner sep=1.3pt}, hub/.style={circle,fill=black,inner sep=1.4pt}]
\node at (-5.05,1.85) {(a)};
\node[hub,label=above:$\alpha_B$] (a) at (-3.75,0) {};
\node[hub,label=above:$\beta_B$] (b) at (-1.35,0) {};
\draw (a)--(b);
\node[sup,label=left:$s_u$] (u1) at (-5.05,.85) {};
\node[sup,label=left:$s_{u'}$] (u2) at (-5.05,-.85) {};
\node[sup,label=right:$s_w$] (w1) at (-.05,.85) {};
\node[sup,label=right:$s_{w'}$] (w2) at (-.05,-.85) {};
\draw (u1)--(a)--(u2);
\draw (w1)--(b)--(w2);
\node at (-2.55,1.32) {$B\cong K_{U_B,W_B}$};
\node at (-5.05,-1.42) {$U_B$};
\node at (-.05,-1.42) {$W_B$};
\node at (-2.55,-1.42) {$\dist_R(s_u,s_w)=3$};
\node at (-2.55,-2.05) {$\dist_R(s_u,s_{u'})=2$};

\node at (2.0,1.85) {(b)};
\node[hub,label=above:$\alpha_{B_1}$] (a1) at (3.1,0) {};
\node[hub,label=above:$\beta_{B_1}$] (b1) at (4.8,0) {};
\node[sup,label=above:$s_x$] (sx) at (6.4,0) {};
\node[hub,label=above:$\alpha_{B_2}$] (a2) at (8.0,0) {};
\node[hub,label=above:$\beta_{B_2}$] (b2) at (9.7,0) {};
\draw (a1)--(b1)--(sx)--(a2)--(b2);
\node[sup,label=left:$s_u$] (su) at (2.0,.8) {};
\node[sup,label=left:$s_{u'}$] (suu) at (2.0,-.8) {};
\draw (su)--(a1)--(suu);
\node[sup,label=right:$s_v$] (sv) at (10.8,.8) {};
\node[sup,label=right:$s_{v'}$] (svv) at (10.8,-.8) {};
\draw (sv)--(b2)--(svv);
\node at (3.95,1.28) {$B_1$};
\node at (8.85,1.28) {$B_2$};
\node at (2.0,-1.42) {$U_{B_1}$};
\node at (6.4,-1.42) {$x$};
\node at (10.8,-1.42) {$W_{B_2}$};
\node at (6.4,-2.05) {$\dist_R(s_u,s_x)=3,\ \dist_R(s_x,s_v)=3$};
\node at (6.4,-2.68) {$\dist_R(s_u,s_v)=6$};
\end{tikzpicture}
\caption{Reader's guide to the exact-$5$ construction for block-complete bipartite graphs.  In panel (a), one complete bipartite block $B\cong K_{U_B,W_B}$ is represented by a single support edge $\alpha_B\beta_B$: opposite-side vertices have support distance $3$, while same-side vertices have support distance $2$.  In panel (b), two block gadgets are glued by identifying the support $s_x$ of the common cutvertex $x$.  Distances inside each block are preserved, while vertices in different blocks do not create new exact-distance-$3$ pairs; for example, the displayed supports satisfy $\dist_R(s_u,s_v)=6$.}
\label{fig:block-bipartite-gadget}
\end{figure}

\begin{theorem}[block-complete multipartite graphs]\label{thm:block-complete-multipartite}
Let $G$ be a connected graph whose blocks are complete multipartite graphs.  Then the following are equivalent.
\begin{enumerate}[label=\textup{(\roman*)}]
\item $G$ is an exact $5$-leaf power.
\item $G$ is bipartite.
\item Every block of $G$ is complete bipartite.
\end{enumerate}
Moreover, if these conditions hold, then $G$ has an explicit support-tree distance-three representation obtained by replacing every block $B\cong K_{U_B,W_B}$ by one support edge $\alpha_B\beta_B$.
\end{theorem}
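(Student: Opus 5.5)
We prove the cycle (i)$\Rightarrow$(ii)$\Rightarrow$(iii)$\Rightarrow$(i). The implication (i)$\Rightarrow$(ii) is the remark after \Cref{prop:reduction}. In a support-tree distance-three representation the tree $R$ is bipartite, and an odd support distance always joins opposite colour classes. Colouring each graph vertex by the colour of its support therefore gives a proper $2$-colouring of $G$.

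For (ii)$\Rightarrow$(iii), note that each block is an induced subgraph of $G$, so it is bipartite. A complete multipartite graph with at least three nonempty parts contains a triangle. Hence every block has at most two parts. A single part with at least two vertices would be edgeless and not $2$-connected, so it cannot be a block of a connected graph with an edge. A single vertex occurs only in the trivial case $G=K_1$, which is handled directly. Thus every block is $K_{U_B,W_B}$ with $U_B,W_B\neq\emptyset$.

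For (iii)$\Rightarrow$(i) we build $R$ explicitly, as in \Cref{fig:block-bipartite-gadget}. Consider the block--cutvertex tree of $G$. For each block $B\cong K_{U_B,W_B}$ create an edge $\alpha_B\beta_B$, and give every vertex $v$ of $G$ a single support vertex $s_v$. Join $s_u$ to $\alpha_B$ for $u\in U_B$, and $s_w$ to $\beta_B$ for $w\in W_B$. A cutvertex lies in several blocks, so its support $s_v$ is attached to one hub in each block containing it.

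The resulting graph $R$ is a tree. It is obtained from the block--cutvertex tree by replacing each block node with an edge and hanging the vertex supports off that edge, so it is connected. Its edge count is $|V(R)|-1$: it has $\sum_B(1+|U_B|+|W_B|)$ edges and $|V(G)|+2\cdot\#\text{blocks}$ vertices, and the identity $\sum_B |B|=|V(G)|+\#\text{blocks}-1$ holds for connected graphs.

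The main step is the distance check. For $u,v$ in a common block, we get distance $3$ across sides and distance $2$ on the same side. The second case includes the situation where $u$ and $v$ share the same hub. For this we must verify that the path between their supports stays inside the gadget of that block, which follows because $R$ is a tree.

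For $u,v$ not in a common block, the path in the block--cutvertex tree passes through at least two blocks, say $u\in B_1$, then cutvertex $x$, then $B_2\ni v$, and so on. The $R$-path runs $s_u$, a hub of $B_1$ (possibly two hubs), $s_x$, a hub of $B_2$, and so on. Each block traversed contributes at least $2$ edges, namely $s_u$ to hub to $s_x$ when $u,x$ lie on the same side. Hence the distance is at least $4$, and never $3$.

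The subtle case is the exclusion above: when $u$ and $x$ lie on the same side of $B_1$ and also $x$ and $v$ lie on the same side of $B_2$, the distance is exactly $4$, which is still fine. Thus the exact distance-three pairs are precisely the edges of $G$, and \Cref{lem:support-reduction} finishes the proof. The constructive "moreover" statement is the same tree $R$.
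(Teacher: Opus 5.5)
Your proof is correct and follows essentially the same route as the paper: the parity argument for (i)$\Rightarrow$(ii), the triangle argument for (ii)$\Rightarrow$(iii), and the same one-edge-per-block gadget $\alpha_B\beta_B$ glued along the block--cutvertex tree, with cross-block pairs excluded by the ``at least two blocks, each contributing at least two edges'' bound. The only cosmetic difference is that you certify $R$ is a tree by connectivity plus the edge count $\sum_B|B|=|V(G)|+\#\text{blocks}-1$, whereas the paper argues that replacing block nodes of the incidence tree by edges preserves connectedness and acyclicity.
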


\begin{proof}
Every exact $5$-leaf power is bipartite.  Indeed, by \Cref{lem:support-reduction}, it has a support-tree distance-three representation on a tree $R$.  A bipartition of $R$ induces a bipartition of the represented graph, because all represented edges join support vertices at odd distance three.  Thus (i) implies (ii).

If $G$ is bipartite and a block $B$ of $G$ is complete multipartite, then $B$ has at most two parts: three nonempty parts would contain a triangle.  Since a nontrivial block has no isolated part by itself, $B$ is complete bipartite.  Hence (ii) implies (iii).  The implication (iii) implies (ii) is immediate because the block graph of a connected graph is a tree and the bipartitions of complete bipartite blocks agree across cutvertices.

It remains to prove (iii) implies (i).  Fix a global bipartition $V(G)=X\cup Y$.  For each block $B$ write
\[
 U_B=B\cap X,\qquad W_B=B\cap Y,
\]
so that $B\cong K_{U_B,W_B}$.  We construct a support tree $R$.

For every graph vertex $v\in V(G)$ introduce a support vertex $s_v$.  For every block $B$ introduce two support vertices $\alpha_B$ and $\beta_B$, and add the support edge
\[
 \alpha_B\beta_B.
\]
For each incidence $v\in B$, add
\[
 s_v\alpha_B \quad\text{if } v\in U_B,
 \qquad
 s_v\beta_B \quad\text{if } v\in W_B.
\]

The graph $R$ is a tree.  To see this, start from the usual block-vertex incidence graph of $G$, whose vertices are the graph vertices and the blocks, and where $v$ is adjacent to $B$ exactly when $v\in B$.  This incidence graph is a tree for connected $G$ after non-cutvertices are included as leaves.  The graph $R$ is obtained from it by replacing each block vertex $B$ by the edge $\alpha_B\beta_B$ and attaching the incidences of vertices in $U_B$ to $\alpha_B$ and those of vertices in $W_B$ to $\beta_B$.  Replacing a vertex of a tree by a tree preserves connectedness and acyclicity.

Now let $u,v$ be distinct vertices of $G$.  If $u$ and $v$ lie in a common block $B$ and are in opposite bipartition classes, then the path in $R$ is
\[
 s_u-\alpha_B-\beta_B-s_v
\]
up to interchanging $\alpha_B$ and $\beta_B$, so
\[
 \dist_R(s_u,s_v)=3.
\]
If they lie in the same part of the same block, then their supports have distance two through $\alpha_B$ or through $\beta_B$.  Finally, if $u$ and $v$ do not lie in a common block, then the path between $s_u$ and $s_v$ in the block-vertex incidence tree uses at least two block vertices and one separating cutvertex; after the above replacement its length in $R$ is at least four.  Hence
\[
 \dist_R(s_u,s_v)=3
 \quad\Longleftrightarrow\quad
 u\text{ and }v\text{ are in opposite parts of a common block}
 \quad\Longleftrightarrow\quad
 uv\in E(G).
\]
Thus $R$ is a support-tree distance-three representation of $G$, and \Cref{lem:support-reduction} gives an exact $5$-leaf root.
\end{proof}

\begin{corollary}[complete bipartite graphs and trees]\label{cor:complete-bipartite-trees}
Every complete bipartite graph and every tree is an exact $5$-leaf power.  More generally, every connected graph whose blocks are complete bipartite graphs is an exact $5$-leaf power.
\end{corollary}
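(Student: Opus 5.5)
This corollary should follow directly from \Cref{thm:block-complete-multipartite}, applied separately to each of the three graph families named in the statement.

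\textbf{The general statement.}
For the last sentence, let $G$ be a connected graph whose blocks are complete bipartite. Then $G$ satisfies condition (iii) of \Cref{thm:block-complete-multipartite}. The implication (iii)$\Rightarrow$(i) gives that $G$ is an exact $5$-leaf power, and the explicit support-edge-per-block tree is the representation.

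\textbf{Complete bipartite graphs and trees.}
Each of these is a special case of the general statement.
\begin{itemize}
\item A complete bipartite graph $K_{m,n}$ with $m,n\ge1$ is connected, and it is $2$-connected or a star. Its blocks are therefore either $K_{m,n}$ itself (when $m,n\ge2$) or copies of $K_2=K_{1,1}$ (when it is a star). In both cases every block is complete bipartite.
\item A tree with at least two vertices is connected, and all its blocks are edges, that is, copies of $K_{1,1}$, which the definition allows.
\end{itemize}

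\textbf{Degenerate cases.}
These need a short separate check.
\begin{itemize}
\item A single-vertex graph (the tree $K_1$, or a ``complete bipartite graph'' with an empty side) is realized by a one-leaf tree.
\item A complete bipartite graph with an empty side, $K_{0,n}$ with $n\ge2$, is edgeless and disconnected. A star with $n$ leaves is an exact $5$-leaf root, since all its leaf distances equal $2\ne5$.
\end{itemize}

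Nothing here is a genuine obstacle. The only care needed is to confirm that $K_2$ counts as a complete bipartite block and to dispose of these trivial or disconnected degenerate cases, which \Cref{prop:reduction} also covers componentwise.
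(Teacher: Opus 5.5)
Your proposal is correct and matches the paper's proof. The paper simply notes that complete bipartite graphs and trees have only complete bipartite blocks, and then invokes the constructive direction (iii)$\Rightarrow$(i) of \Cref{thm:block-complete-multipartite}. Your version is slightly more careful than the paper's one line: the paper says a complete bipartite graph has ``one complete bipartite block,'' which fails for stars $K_{1,n}$ with $n\ge2$ (their blocks are copies of $K_2$), and you also handle the single-vertex and edgeless cases that the connected-graph theorem does not literally cover.
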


\begin{proof}
Complete bipartite graphs have one complete bipartite block, and trees have only $K_2$ blocks.  The final assertion is exactly the constructive direction of \Cref{thm:block-complete-multipartite}.
\end{proof}

\begin{corollary}[cacti]\label{cor:cacti}
Let $G$ be a cactus.  Then $G$ is an exact $5$-leaf power if and only if $G$ is bipartite, equivalently if and only if every cycle block of $G$ is even.
\end{corollary}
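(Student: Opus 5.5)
The plan is to reduce the cactus statement to \Cref{thm:block-complete-multipartite}, with a small preliminary reduction to connected graphs. By \Cref{prop:reduction}, a graph is an exact $5$-leaf power if and only if each connected component is one. (Equivalently, representations of components can be joined by long paths, and each component is an induced subgraph.) Bipartiteness and the even-cycle condition are also componentwise. So we may assume $G$ is a connected cactus.

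In a cactus every block is either a single edge $K_2$ or a chordless cycle $C_m$ with $m\ge3$. The key observation is that these blocks fit \Cref{thm:block-complete-multipartite} in the cases we need. $K_2=K_{1,1}$ is complete bipartite. $C_3=K_3=K_{1,1,1}$ is complete multipartite. $C_4=K_{2,2}$ is complete bipartite. However, $C_m$ for $m\ge5$ is not complete multipartite, so the theorem cannot be applied to an arbitrary cactus directly. I would therefore prove the two directions separately.

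For the forward direction, I would use the fact, proved at the start of \Cref{thm:block-complete-multipartite}, that every exact $5$-leaf power is bipartite, since odd support distances cross the bipartition of $R$. Hence an exact $5$-leaf cactus is bipartite. The equivalence between ``bipartite'' and ``every cycle block is even'' is standard. Every cycle of a graph lies inside a single block, and in a cactus the only cycles are the cycle blocks themselves. So $G$ has an odd cycle exactly when some cycle block is odd.

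For the converse, assume every cycle block is even. Blocks that are $K_2$ or $C_4$ are complete bipartite, but even cycles $C_{2m}$ with $m\ge3$ are not, so the block construction does not cover them. This is the main obstacle: I need a support-tree distance-three representation of a long even cycle that can still be glued at cutvertices. One route is to invoke the cycle theory of \Cref{sec:cycle-correction}, although that section appears later and only treats $l\ge8$. Since $C_6$ is an induced subgraph of neither $L_2$ nor $L_3$ in an obviously helpful way, I would instead try to construct a direct support tree for $C_{2m}$ and then glue block gadgets along cutvertices exactly as in the proof of \Cref{thm:block-complete-multipartite}.

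At the gluing step, the danger is a spurious distance-three pair between supports in different blocks. To rule this out, I would attach the cutvertex supports so that distances across a cut are at least four, as in panel (b) of \Cref{fig:block-bipartite-gadget}. If the gadget for $C_{2m}$ has its cutvertex supports placed such that every other support is at distance at least $2$ from them, then cross-block distances are at least $4$, and the gluing argument goes through. I expect this step, namely producing a long-even-cycle gadget with that property, to be the real difficulty; as stated, the corollary may implicitly rely on the later cycle theorem.
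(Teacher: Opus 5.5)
\textbf{There is a genuine gap: the converse direction is not proved.} Your forward direction is correct and matches the paper: odd support distances force bipartiteness. So is the equivalence ``bipartite $\Leftrightarrow$ every cycle block is even.'' The converse, however, is not established. You name the obstacle, long even cycle blocks, but never produce a representation. So nothing is proved beyond cacti whose cycle blocks are all $C_4$, which \Cref{thm:block-complete-multipartite} already covers. The paper closes this direction by citing Corollary~4 of Brandstaedt, Le and Rautenbach (all bipartite cacti are exact $5$-leaf powers), with only a one-line pointer to block induction.

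More seriously, the gluing criterion you propose cannot work as stated. You require that in each gadget every other support lie at distance at least $2$ from each cutvertex support, so that cross-block distances are at least $2+2$. In a cactus every vertex of a cycle block may be a cutvertex, so this demands a representation of $C_{2m}$ in which no two supports are adjacent. For $2m\ge 8$ no such representation exists, for three reasons:
\begin{itemize}
\item by \Cref{thm:cycle-block-language}, every root is some $T(W)$;
\item the support distance between $z_{i,r}$ and $z_{j,s}$ is $(r-1)+(s-1)+|i-j|$;
\item every word of $\mathcal N\cup\mathcal X$ contains $1|1$, $12$, $1|13$ or $123$, each of which puts two supports at distance one.
\end{itemize}
For example, gluing the $K_{2,2}$ gadget of \Cref{thm:block-complete-multipartite} at such a vertex $u$ places the same-side vertex $w$ at distance $2$ from $s_u$. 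Since some $s_v$ is adjacent to $s_u$, this gives the spurious pair $\dist(s_v,s_w)=3$.

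\textbf{The missing idea is an asymmetric gluing.} Root the block-cut tree. Represent each non-root block $B$ so that its parent cutvertex $u$ is \emph{far}: the neighbours of $u$ in $B$ are at support distance $3$, and all other supports of $B$ are at distance at least $4$ from $s_u$. Such gadgets exist:
\begin{itemize}
\item For $K_2$ this is trivial.
\item For $C_4=u\,a\,w\,b$, take a path $s_u\,p\,m\,q\,s_w$ and attach $s_a,s_b$ to $m$.
\item For $C_{2m}$ with $m\ge 3$, take the word $23\,B_2^{\,m-3}\,\widehat{B}_2$. For $m=3$ this is $23|1|1|23$, which one checks directly represents $C_6$. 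Let $u$ be the leaf $z_{0,3}$; its support distance to $z_{j,s}$ is $s+j+1$. This equals $3$ exactly for its two neighbours $z_{0,2}$ and $z_{1,1}$. It is never at most $2$, because $A_0=\{2,3\}$. By vertex-transitivity of the cycle, any cutvertex can play this role.
\end{itemize}
Now identify cutvertex supports. Within each block the gadgets are correct. Take two vertices with no common block, and look at the first cutvertex $u$ on the tree path between their supports. One of the two blocks meeting at $u$ is a child block of $u$. The path therefore has a segment of length at least $3$ inside that block and a segment of length at least $1$ inside the other, so its length is at least $4$. Hence no spurious exact-distance-$3$ pair arises, which proves the converse.
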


\begin{proof}
The necessity is bipartiteness of exact odd-leaf powers, as above.  Conversely, Brandstaedt, Le and Rautenbach proved that all bipartite cacti are exact $5$-leaf powers \cite[Corollary~4]{BLR2010}.  Equivalently, one may build the root by the standard block induction using $K_2$ blocks and exact $5$-leaf roots of even chordless cycle blocks.
\end{proof}

\begin{corollary}[maximum-degree-two graphs]\label{cor:max-degree-two}
Let $G$ be a graph with maximum degree at most two.  Then $G$ is an exact $5$-leaf power if and only if $G$ is bipartite.
\end{corollary}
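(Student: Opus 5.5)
Necessity is the standard parity argument. By \Cref{lem:support-reduction}, an exact $5$-leaf power has a support-tree distance-three representation. Every represented edge joins supports at odd distance three, so a proper $2$-colouring of the support tree induces a bipartition of $G$. This holds for every graph, with no degree assumption.

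For sufficiency, assume $G$ is bipartite with maximum degree at most two. Then every connected component of $G$ is an isolated vertex, a path, or a cycle, and a cycle component is even because $G$ is bipartite. Each such component is a bipartite cactus: a path is a tree, a single vertex is trivial, and an even cycle is a cactus with one even cycle block.

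\Cref{cor:cacti} then shows that each component is an exact $5$-leaf power. For trees this already follows from \Cref{cor:complete-bipartite-trees}; note also that $C_4=K_{2,2}$ is covered directly.

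Finally, \Cref{prop:reduction} lets us combine the components. Its last paragraph joins the support trees of the components by sufficiently long paths, so that no cross-component support distance equals three. Hence the disjoint union $G$ is an exact $5$-leaf power.

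The only step with real content is the representability of long even cycles, which we import through \Cref{cor:cacti} from \cite[Corollary~4]{BLR2010}. If a self-contained argument were wanted, we would give an explicit support tree for $C_{2m}$, for example a spine with pendant attachments. That construction would be the main obstacle, but it is unnecessary here because the corollary is available.
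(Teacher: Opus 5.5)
Your proof is correct and has the same skeleton as the paper's: parity for necessity, componentwise sufficiency with paths handled as trees, and gluing via \Cref{prop:reduction}.

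The one real difference is where even cycles come from.
\begin{itemize}
\item The paper uses its own block-language theorem (\Cref{thm:cycle-block-language}) for $C_l$ with $l\ge8$, and \cite[Lemma~6]{BLR2010} for $C_4$ and $C_6$.
\item You send every even cycle through \Cref{cor:cacti}, that is, through the single external result \cite[Corollary~4]{BLR2010}.
\end{itemize}

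Your route avoids the case split on cycle length, and it avoids needing a word of every length $l$ in $\mathcal N\cup\mathcal X$. The cost is that it rests entirely on the cited cactus result. The paper's route instead draws the long cycles from its own main theorem.

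The short cases can also be obtained inside the paper. You already note $C_4=K_{2,2}$ via \Cref{cor:complete-bipartite-trees}. Similarly, $C_6\cong K_{3,3}-M$ is a crown and is covered by \Cref{cor:crowns-cube}.
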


\begin{proof}
Each connected component of $G$ is a path or a cycle.  Paths are trees and hence are exact $5$-leaf powers by \Cref{cor:complete-bipartite-trees}.  Even cycles are covered by the cycle block-language theorem for length at least eight, together with the finite cases $C_4$ and $C_6$ of \cite[Lemma~6]{BLR2010}.  Odd cycles are excluded by bipartiteness of exact odd-leaf powers.  Components are represented independently by \Cref{prop:reduction}.
\end{proof}

\begin{remark}[dense squares versus square chains]
\Cref{thm:ladder,thm:block-complete-multipartite} should be read together.  They show that the obstruction is not the mere presence of many induced squares.  A complete bipartite block may contain arbitrarily many induced $C_4$'s, but they are all carried by one support edge $\alpha_B\beta_B$.  By contrast, $L_3$ is bipartite and sparse, yet the third square in its induced square-chain already forces an obstruction.  Thus exact distance five distinguishes dense square blocks from square propagation along a chain.
\end{remark}

\section{Bipartite co-cluster graphs and crowns}\label{sec:cocluster}

The complete-bipartite block theorem is a positive result for dense blocks.  Another dense bipartite family is obtained by starting from a complete bipartite graph and deleting a structured disjoint union of complete bipartite subgraphs in the bipartite complement.  The resulting class contains all crown graphs and, in particular, the cube $Q_3$; see \Cref{fig:crowns-coclusters}.

\begin{definition}[bipartite co-cluster graphs]
Let $G=(X,Y;E)$ be a bipartite graph.  We call $G$ a \emph{bipartite co-cluster graph} if there are pairwise disjoint sets
\[
 X=X_0\dot\cup X_1\dot\cup\cdots\dot\cup X_t,
 \qquad
 Y=Y_0\dot\cup Y_1\dot\cup\cdots\dot\cup Y_t,
\]
where $X_i$ and $Y_i$ are nonempty for $i\ge1$, such that
\[
 xy\notin E(G)
 \quad\Longleftrightarrow\quad
 x\in X_i,\ y\in Y_i\text{ for some }i\in\{1,\ldots,t\}.
\]
Equivalently, the bipartite complement of $G$ is a disjoint union of complete bipartite graphs $K_{X_i,Y_i}$, with possible isolated vertices $X_0\cup Y_0$.

\end{definition}

\begin{figure}[H]
\centering
\begin{tikzpicture}[scale=.87, every node/.style={font=\scriptsize}, v/.style={circle,draw,inner sep=1.2pt,fill=white}]
\begin{scope}[shift={(0,0)}]
\node[font=\small] at (1.9,2.25) {(a) A crown $K_{4,4}-M$};
\foreach \i in {1,2,3,4}{
  \node[v,label=above:$x_\i$] (x\i) at ({(\i-1)*1.25},1.25) {};
  \node[v,label=below:$y_\i$] (y\i) at ({(\i-1)*1.25},0) {};
}
\foreach \i in {1,2,3,4}{
  \foreach \j in {1,2,3,4}{
    \ifnum\i=\j\relax\else\draw[gray!35] (x\i)--(y\j);\fi
  }
}
\foreach \i in {1,2,3,4}{\draw[red!65!black,dashed,thick] (x\i)--(y\i);}
\end{scope}
\begin{scope}[shift={(6.2,0)}]
\node[font=\small] at (1.8,2.25) {(b) Co-cluster nonedge blocks};
\foreach \i/\lab in {0/$Y_1$,1/$Y_2$,2/$Y_3$}{\node at ({.75+\i*1.05},1.72) {\lab};}
\foreach \i/\lab in {0/$X_1$,1/$X_2$,2/$X_3$}{\node at (-.28,{1.18-\i*.82}) {\lab};}
\foreach \r in {0,1,2}{
  \foreach \c in {0,1,2}{
    \pgfmathsetmacro{\xx}{.25+\c*1.05}
    \pgfmathsetmacro{\yy}{.8-\r*.82}
    \ifnum\r=\c
      \fill[red!18] (\xx,\yy) rectangle +(1,.68);
      \draw[red!55!black,thick] (\xx,\yy) rectangle +(1,.68);
    \else
      \draw[gray!70] (\xx,\yy) rectangle +(1,.68);
    \fi
  }
}
\end{scope}
\end{tikzpicture}
\caption{Dense bipartite graphs with structured missing edges.  In the crown graph the dashed pairs are the deleted matching.  In the co-cluster pattern, the shaded cells are the complete bipartite nonedge blocks $X_i\times Y_i$ in the bipartite complement.}
\label{fig:crowns-coclusters}
\end{figure}

\begin{theorem}[bipartite co-cluster graphs]\label{thm:cocluster}
Every bipartite co-cluster graph is an exact $k$-leaf power for every $k\ge5$.
\end{theorem}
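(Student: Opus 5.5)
The plan is to apply \Cref{thm:support-model}: for each $k\ge5$ we set $d=k-2\ge3$ and build one explicit support-tree distance-$d$ representation of a given bipartite co-cluster graph $G$ with partition $X=X_0\dot\cup\cdots\dot\cup X_t$, $Y=Y_0\dot\cup\cdots\dot\cup Y_t$. A single construction should work for every $k$; it is a star of paths, where each nonedge block $K_{X_i,Y_i}$ is put on its own ray from a centre.

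Concretely, take a centre vertex $r$. For each $i\in\{1,\ldots,t\}$ attach to $r$ a path
\[
 r-u_i-p_{i,2}-\cdots-p_{i,d-1}
\]
of length $d-1$. Set $s_x=u_i$ for every $x\in X_i$ and $s_y=p_{i,d-1}$ for every $y\in Y_i$, writing $p_{i,1}=u_i$ (so when $d=3$ this ray is just $r-u_i-p_{i,2}$). For the free parts, attach one extra neighbour $u_0$ of $r$ and use it as the common support of all of $X_0$. Attach one further path of length $d-1$ from $r$, and use its endpoint as the common support of all of $Y_0$. If $X_0$ or $Y_0$ is empty, the corresponding gadget is simply omitted. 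The case $t=0$ is the complete bipartite graph, which is covered by the same construction.

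Next I would run through all pairs of distinct vertices and compute support distances.
\begin{itemize}
\item Two $X$-vertices have supports at distance $0$ or $2$.
\item Two $Y$-vertices have supports at distance $0$ or $2(d-1)$.
\item $x\in X_i$ and $y\in Y_j$ with $i\neq j$, or with $0\in\{i,j\}$, lie on different rays from $r$. Their supports are at distance $1+(d-1)=d$.
\item $x\in X_i$ and $y\in Y_i$ with $i\ge1$ lie on the same ray, at distance $(d-1)-1=d-2$.
\end{itemize}
Hence distance exactly $d$ occurs precisely for the $X$--$Y$ pairs outside the blocks $X_i\times Y_i$ with $i\ge1$. These are exactly the edges of $G$.

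The only delicate step is making sure no spurious pair lands at distance exactly $d$. This reduces to three inequalities: $2\neq d$, $2d-2\neq d$ (equivalently $d\neq2$), and $d-2\neq d$. All three hold because $d\ge3$, which is exactly where the hypothesis $k\ge5$ is used. It also has to be checked that vertices sharing a support are genuinely nonadjacent false twins, but this is automatic: shared supports occur only within a single $X_i$ or a single $Y_i$. Finally, \Cref{thm:support-model} turns the support tree into an exact $k$-leaf root by attaching one leaf per vertex. As a special case this yields the crown graphs, where every $|X_i|=|Y_i|=1$, and $Q_3\cong K_{4,4}-M$.
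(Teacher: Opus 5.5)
Your proposal is correct and is essentially the paper's own proof. It uses the same star of paths of length $d-1$ from a centre, with $X_i$ at distance one and $Y_i$ at distance $d-1$ on the $i$th ray, separate rays for $X_0$ and $Y_0$, and the same distance checks: $d$ across rays, $d-2$ within a ray, and $0$, $2$ or $0$, $2d-2$ on the same side, all justified by $d=k-2\ge3$.
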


\begin{proof}
Let $G=(X,Y;E)$ have the decomposition in the definition, and put
\[
 d=k-2\ge3.
\]
By \Cref{thm:support-model}, it is enough to construct a support tree $R$ in which adjacency is exactly support distance $d$.

Take a centre $c$.  For each $i\in\{1,\ldots,t\}$ take a path
\[
 c=p_{i,0}p_{i,1}\cdots p_{i,d-1}
\]
of length $d-1$ from $c$.  Place all vertices of $X_i$ at the support $p_{i,1}$, and all vertices of $Y_i$ at the support $p_{i,d-1}$.  If $X_0$ is nonempty, add one further branch $c=x_0^*$ of length one and place all vertices of $X_0$ at $x_0^*$.  If $Y_0$ is nonempty, add another branch
\[
 c=y_{0,0}y_{0,1}\cdots y_{0,d-1}
\]
of length $d-1$ and place all vertices of $Y_0$ at $y_{0,d-1}$.

We check distances.  If $x\in X_i$ and $y\in Y_i$ for some $i\ge1$, then their supports lie on the same branch and
\[
 \dist_R(s_x,s_y)=d-2\ne d,
\]
so the prescribed nonedges are not represented.  If $x\in X_i$ and $y\in Y_j$ with $i\ne j$, where $i,j\ge1$, then the path goes through $c$ and
\[
 \dist_R(s_x,s_y)=1+(d-1)=d.
\]
The same formula holds for pairs with $x\in X_0$ and $y\in Y_j$, or with $x\in X_i$ and $y\in Y_0$, and also for $x\in X_0$, $y\in Y_0$ when both sets are present, because the corresponding supports lie on different branches at distances $1$ and $d-1$ from $c$.

Finally, pairs on the same side are never represented.  Two $X$-supports have distance $0$ or $2$, and two $Y$-supports have distance $0$ or $2d-2$; none of these values equals $d$ because $d\ge3$.  Hence support distance $d$ gives exactly the edges of $G$, and attaching one leaf to each support gives an exact $k$-leaf root.
\end{proof}

\begin{corollary}[crowns and the cube]\label{cor:crowns-cube}
For every $n\ge2$, the crown graph $K_{n,n}-M$ is an exact $k$-leaf power for every $k\ge5$.  In particular, the cube $Q_3\cong K_{4,4}-M$ is an exact $5$-leaf power.
\end{corollary}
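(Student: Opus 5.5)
The plan is to show that the crown graph is a bipartite co-cluster graph and then apply \Cref{thm:cocluster}. Write $K_{n,n}-M$ with parts $X=\{x_1,\ldots,x_n\}$, $Y=\{y_1,\ldots,y_n\}$ and deleted perfect matching $M=\{x_iy_i\}$. Take the decomposition with $t=n$, $X_0=Y_0=\emptyset$, $X_i=\{x_i\}$ and $Y_i=\{y_i\}$. Then
\[
 x_iy_j\notin E \quad\Longleftrightarrow\quad i=j,
\]
which is exactly the defining condition: the bipartite complement is the disjoint union of the complete bipartite graphs $K_{\{x_i\},\{y_i\}}\cong K_{1,1}$. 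Hence \Cref{thm:cocluster} gives an exact $k$-leaf root for every $k\ge5$. The hypothesis $n\ge2$ is only there so that the crown is a genuine (edge-containing) graph; the construction itself works for every $n\ge1$.

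For the cube, I will exhibit an isomorphism $Q_3\cong K_{4,4}-M$. Split the vertices of $\{0,1\}^3$ by parity of weight, with $X$ the even-weight words and $Y$ the odd-weight words. Pair each $x\in X$ with its antipode $\bar x\in Y$. Two words of opposite parity are at Hamming distance $1$ or $3$, and distance $3$ happens only for the antipode. So $x\in X$ is adjacent in $Q_3$ to every $y\in Y$ except $\bar x$, and $Q_3$ is $K_{4,4}$ minus the perfect matching $\{x\bar x\}$. Applying the first part with $n=4$ and $k=5$ finishes the proof.

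There is no real obstacle here. The only step needing care is the Hamming-distance parity check in the cube isomorphism, and that is routine.
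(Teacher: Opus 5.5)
Your proof is correct and follows the paper's argument: the crown is realized as the co-cluster graph with singleton blocks $X_i=\{x_i\}$, $Y_i=\{y_i\}$ and $X_0=Y_0=\varnothing$, so \Cref{thm:cocluster} applies, and $Q_3\cong K_{4,4}-M$ via the even/odd-weight bipartition with the antipodal matching removed. One small point concerns only your aside that the construction works verbatim for $n=1$: with a single branch the centre $c$ becomes a leaf of $R$ carrying no support, so it must be pruned. This does not affect the statement.
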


\begin{proof}
A crown graph is the bipartite co-cluster graph obtained by taking $|X_i|=|Y_i|=1$ for $i=1,\ldots,n$ and $X_0=Y_0=\varnothing$.  The usual bipartition of $Q_3$ into even and odd vertices gives $Q_3\cong K_{4,4}$ minus the perfect matching joining each even vertex to its antipode.
\end{proof}

\begin{remark}
The co-cluster theorem is independent of the block-complete bipartite theorem.  For $n\ge3$, the crown graph $K_{n,n}-M$ is two-connected and is not complete bipartite, but it is still represented by the single-centre support construction above.  Thus exact distance five admits both complete bipartite blocks and dense bipartite graphs with structured missing bicliques.
\end{remark}

\section{Complete multipartite blocks at larger exact distances}\label{sec:larger-k-blocks}

The exact $5$ theorem for block-complete multipartite graphs is the first odd member of a broader pattern.  In a single block the classification is especially clean for every exact distance.  For block decompositions, all odd distances $k\ge5$ behave like distance five, while even distances $k\ge8$ allow arbitrary complete multipartite blocks.  The exceptional even boundary $k=6$ is not included in the block theorem below because the same construction can create new exact-$6$ pairs across adjacent blocks.

\begin{lemma}[distance lifting]\label{lem:distance-lifting}
If a graph $G$ is an exact $k$-leaf power, then $G$ is an exact $(k+2)$-leaf power.
\end{lemma}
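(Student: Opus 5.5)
The natural approach is to lengthen every pendant leaf edge of an exact $k$-leaf root by one. Let $T$ be an exact $k$-leaf root of $G$. For each leaf $u$ with neighbour $s(u)$, subdivide the edge $us(u)$ once by a new vertex $m_u$. Call the resulting tree $T'$.

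The leaf set of $T'$ is still exactly $V(G)$. The subdivision vertices have degree two, and no original leaf loses its leaf status. This relies on every leaf of $T$ being a graph vertex, which holds by the definition of a root.

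Now take distinct leaves $u,v$. The unique $u$--$v$ path in $T$ uses both pendant edges $us(u)$ and $vs(v)$, and each of them is subdivided exactly once. So $\dist_{T'}(u,v)=\dist_T(u,v)+2$, and
\[
 \dist_{T'}(u,v)=k+2 \iff \dist_T(u,v)=k \iff uv\in E(G).
\]
Hence $T'$ is an exact $(k+2)$-leaf root of $G$.

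One degenerate case needs a check. If $G$ has only two vertices, $T$ may be a single edge $uv$, whose pendant edges coincide. Subdividing that edge twice still adds exactly $2$ to the distance, so the conclusion holds there as well.

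Equivalently, the same statement follows from \Cref{thm:support-model}. Take a support-tree distance-$(k-2)$ representation $(R,s)$. Attach a new pendant vertex $s'(u)$ to $s(u)$ for each $u$, using distinct new vertices for distinct $u$ even when supports coincide. Then $\dist(s'(u),s'(v))=\dist(s(u),s(v))+2$, which gives a distance-$k$ representation.

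There is no real obstacle here. The only points needing care are that subdividing each pendant edge, with the path between two leaves using both of them, adds exactly $2$, and that no new leaves are created.
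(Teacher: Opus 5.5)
Your proof is correct and matches the paper's: the paper replaces every edge incident with a graph leaf by a path of length two, which is exactly your single subdivision of each pendant edge, so every leaf-to-leaf distance increases by exactly $2$. Your separate treatment of the single-edge root, where the two pendant edges coincide and must be subdivided twice, covers a case the paper's one-line argument leaves implicit.
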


\begin{proof}
Let $T$ be an exact $k$-leaf root of $G$.  Replace every edge incident with a graph leaf by a path of length two.  The graph leaves remain exactly the original leaves, and the distance between any two graph leaves increases by two.  Thus the same graph is represented at exact distance $k+2$.
\end{proof}

\begin{theorem}[complete multipartite graphs]\label{thm:complete-multipartite-all-k}
Let $k\ge3$, and let
\[
 G=K_{n_1,\ldots,n_s}
\]
be a complete multipartite graph with nonempty partite classes.  Then $G$ is an exact $k$-leaf power if and only if either $k$ is even or $s\le2$.
\end{theorem}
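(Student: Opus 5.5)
We split into the necessity direction (odd $k$ forces $s\le2$) and the sufficiency direction (even $k$, or $s\le2$ for any $k\ge3$), working throughout with the support-tree model of \Cref{thm:support-model} with $d=k-2$.

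\emph{Necessity.} Suppose $k$ is odd and $G$ is an exact $k$-leaf power. Then $d=k-2$ is odd. A bipartition of the support tree $R$ induces a bipartition of $G$, since every represented edge joins supports at odd distance $d$. This is the same argument as in \Cref{thm:block-complete-multipartite}. Hence $G$ is bipartite. But if $s\ge3$, then $G$ contains a triangle, one vertex from each of three classes. Therefore $s\le2$.

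\emph{Sufficiency for $s\le2$.} If $s=1$, then $G$ is edgeless; a star with all leaves at pairwise distance $2\ne k$ works (or $k\ge3$ with any star). If $s=2$, then $G=K_{n_1,n_2}$ is a bipartite co-cluster graph with $t=0$. For $k\ge5$ it is therefore handled by \Cref{thm:cocluster}, or directly by the construction in \Cref{thm:block-complete-multipartite}, lifted via \Cref{lem:distance-lifting} for odd $k$. The small cases $k=3,4$ need direct roots. For $k=3$, take an edge $ab$, attach the $n_1$ leaves of one class to $a$ and the $n_2$ leaves of the other class to $b$. For $k=4$, use a support path of length $2$, say $a-m-b$, with the two classes attached to $a$ and $b$. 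In both cases cross pairs are at distance $k$ and same-class pairs at distance $2$. Since $K_{n_1,n_2}$ is also complete multipartite with even $k$ allowed, the even case below covers it as well.

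\emph{Sufficiency for even $k$.} Write $d=k-2=2r$ with $r\ge1$. Take a centre $c$ and $s$ branches, the $i$th a path of length $r$ from $c$ ending at $p_i$, and place all vertices of class $i$ at $p_i$. Two vertices in different classes then have support distance $2r=d$. Two vertices in the same class have support distance $0$, and after the leaves are attached their leaf distance is $2$. Since $k\ge4$ in this case, $2\ne k$, so same-class pairs are nonadjacent. Equivalently, in $T$ we attach all leaves of class $i$ at depth $k/2$ below a common centre in distinct branches. Even $k=4$ works: $r=1$, and class leaves hang from distinct neighbours of $c$. The subtlety $k=2$ is excluded by the hypothesis $k\ge3$.

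\emph{Main obstacle.} The only delicate point is bookkeeping at the small distances $k=3,4$, where \Cref{thm:support-model} gives support distance $d=1$ or $2$. There, same-side distance values must be checked directly against $d$, rather than relying on $d\ge3$ as in \Cref{thm:cocluster}. Everything else is a direct distance computation.
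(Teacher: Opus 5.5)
Your proof is correct and follows essentially the same route as the paper: necessity comes from the parity (bipartiteness) argument together with a triangle when $s\ge3$, and sufficiency for even $k$ uses the same construction, namely a centre with $s$ internally disjoint branches of length $(k-2)/2$ in the support tree. The only difference is cosmetic: for $s=2$ and odd $k\ge5$ you invoke the co-cluster theorem (or the block construction plus distance lifting), whereas the paper writes down one uniform root for all odd $k=2q+1$, an edge $ab$ with pendant paths of length $q$ to the leaves, which at $q=1$ is exactly your $k=3$ root; also, the one-vertex graph $K_1$ needs the one-vertex tree rather than a star, as the paper notes.
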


\begin{proof}
If $k$ is odd, then every exact $k$-leaf power is bipartite.  Indeed, a bipartition of an exact $k$-leaf root separates all pairs of leaves at odd distance.  A complete multipartite graph with at least three nonempty parts contains a triangle.  Hence, for odd $k$, exact $k$-leaf representability forces $s\le2$.

It remains to give the constructions.  First let $k=2q+1$ be odd, with $q\ge1$, and assume $s\le2$.  If $s=1$, then $G$ is edgeless.  The one-vertex case is represented by the one-vertex tree; otherwise a star with the vertices of $G$ as leaves has all leaf distances equal to $2\ne k$.  If $s=2$, write the two parts as $X$ and $Y$.  Take an edge $ab$.  For every $x\in X$, attach a path of length $q$ from $a$ to the leaf $x$; for every $y\in Y$, attach a path of length $q$ from $b$ to the leaf $y$.  Then two leaves in opposite parts have distance
\[
 q+1+q=2q+1=k,
\]
whereas two leaves in the same part have distance $2q=k-1$.  Thus the exact $k$-leaf graph is $K_{X,Y}$.

Now let $k=2q$ be even, with $q\ge2$.  If $s=1$, use the same edgeless construction as above.  If $s\ge2$, take a centre $c$.  For each part $P_i$, take a vertex $h_i$ at distance $q-1$ from $c$, the paths from $c$ to the different $h_i$ being internally disjoint.  Attach every vertex of $P_i$ as a leaf adjacent to $h_i$.  Two leaves in the same part have distance $2$, while leaves in different parts have distance
\[
 1+(q-1)+(q-1)+1=2q=k.
\]
Hence the exact $k$-leaf graph is exactly $K_{n_1,\ldots,n_s}$.
\end{proof}

\begin{theorem}[block-complete multipartite graphs at larger distances]\label{thm:block-complete-larger-k}
Let $G$ be a connected graph whose blocks are complete multipartite graphs.
\begin{enumerate}[label=\textup{(\roman*)}]
\item If $k\ge5$ is odd, then $G$ is an exact $k$-leaf power if and only if $G$ is bipartite, equivalently if and only if every block of $G$ is complete bipartite.
\item If $k\ge8$ is even, then $G$ is an exact $k$-leaf power.
\end{enumerate}
\end{theorem}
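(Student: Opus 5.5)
For part (i) I would reduce everything to the exact-$5$ case and then lift. Necessity needs no new work. For odd $k$ every exact $k$-leaf power is bipartite, by the parity argument used in \Cref{thm:complete-multipartite-all-k}. A bipartite block that is complete multipartite has at most two parts, so it is complete bipartite, as in \Cref{thm:block-complete-multipartite}. For sufficiency, \Cref{thm:block-complete-multipartite} makes every block-complete bipartite graph an exact $5$-leaf power. Applying \Cref{lem:distance-lifting} repeatedly then gives exact $k$-leaf representability for every odd $k\ge5$.

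For part (ii) I would give a direct support-tree construction and use \Cref{thm:support-model} with $d=k-2$. Here $d$ is even and $d\ge6$; write $d=2q$ with $q\ge3$. The construction follows the block-vertex incidence tree, as in the proof of \Cref{thm:block-complete-multipartite}. For every graph vertex $v$ introduce one support vertex $s_v$. For every block $B$ with parts $P_{B,1},\dots,P_{B,m}$ introduce a centre $c_B$ and, for each part, a neighbour $h_{B,i}$ of $c_B$. For every incidence $v\in P_{B,i}$, join $s_v$ to $h_{B,i}$ by a new path of length $q-1$.

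The resulting graph $R$ is a tree. It arises from the incidence tree by replacing each block vertex with a subtree (a star with centre $c_B$) and each incidence edge with a path. This is the same connectedness and acyclicity argument as in \Cref{thm:block-complete-multipartite}. The support of a cutvertex is a single vertex $s_v$, and its incidences into different blocks go to different gadgets.

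The key step is the distance check, and the crucial choice is to make long legs and short hubs. Consider two distinct vertices of a common block $B$. If they lie in different parts, their distance is $(q-1)+1+1+(q-1)=2q=d$. If they lie in the same part, their distance is $2(q-1)=d-2\ne d$. Now take $u,v$ with no common block. The $R$-path between $s_u$ and $s_v$ follows the incidence-tree path, so it crosses at least two block gadgets separated by a cutvertex support. Each crossing of a gadget runs between two distinct vertex supports and therefore has length at least $d-2$. The total is at least $2d-4$, which exceeds $d$ exactly because $d\ge6$.

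I expect this cross-block estimate to be the main obstacle, because it is where $k=6$ breaks down. With hubs of length $q-1$ and legs of length $1$, the same-part distance is only $2$. Chains of same-part hops across several cutvertices could then sum to exactly $d$, which is the short-shadow collision. Choosing the legs long, so that the same-part distance is $d-2$, is what removes these collisions once $d\ge6$. Finally, I would attach a graph leaf to every $s_v$ to obtain the exact $k$-leaf root.
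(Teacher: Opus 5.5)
Your proposal is correct. Part (ii) is the paper's construction. The paper writes $k=2q$, so $d=2q-2$, and joins $s_v$ to part hubs adjacent to $c_B$ by incidence paths of length $q-2$; this is your gadget with your $q$ shifted by one. Its cross-block bound $2(2q-4)>2q-2$ for $q\ge4$ is your $2(d-2)>d$ for $d\ge6$.

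The only real difference is the sufficiency half of (i). The paper does not lift from $k=5$; it builds the odd-$k$ root directly. For $k=2q+1$ it uses the two block hubs $\alpha_B\beta_B$ joined by an edge, with internally disjoint incidence paths of length $q-1$. It then checks three cases: opposite sides at distance $2q-1$, same side at distance $2q-2$, and no common block at distance at least $4q-4$.

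Your route through \Cref{thm:block-complete-multipartite} and \Cref{lem:distance-lifting} is shorter. It also makes clear that odd distances add nothing beyond $k=5$. The lifted root puts the extra length on the pendant leaf edges rather than on separate incidence paths, so at cutvertices it is a different tree from the paper's; both are valid. The paper's direct construction is self-contained and keeps the odd and even cases in the same explicit gadget form.
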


\begin{proof}
Let $k=2q+1\ge5$ be odd, so $q\ge2$.  The necessity of bipartiteness is the same parity argument as above.  In a block-complete multipartite graph, bipartiteness is equivalent to every block being complete bipartite.

Assume now that $G$ is bipartite.  Fix its global bipartition $V(G)=X\cup Y$.  We construct a support tree $R$ in which adjacency is exactly support distance $k-2=2q-1$; attaching one new leaf to each support vertex then gives an exact $k$-leaf root.  For every graph vertex $v$, introduce a support vertex $s_v$.  For each block $B$, write
\[
 U_B=B\cap X,\qquad W_B=B\cap Y,
\]
and introduce two block hubs $\alpha_B,\beta_B$ joined by an edge.  For every incidence $v\in B$, add an internally disjoint path of length $q-1$ from $s_v$ to $\alpha_B$ if $v\in U_B$, and from $s_v$ to $\beta_B$ if $v\in W_B$.

This graph $R$ is a tree: it is obtained from the block-vertex incidence tree of $G$ by replacing each block vertex by the edge $\alpha_B\beta_B$ and by subdividing incidence edges.  If $u,v$ lie in opposite sides of a common block, then
\[
 \dist_R(s_u,s_v)=(q-1)+1+(q-1)=2q-1=k-2.
\]
If they lie in the same side of a common block, their support distance is $2q-2$.  If they do not lie in a common block, the path between their supports passes through at least two block incidences; its length is at least
\[
 2(2q-2)=4q-4>2q-1=k-2.
\]
Therefore the exact distance-$k$ leaf graph obtained from $R$ is precisely $G$.

Now let $k=2q\ge8$ be even, so $q\ge4$.  We again construct a support tree $R$ in which adjacency is exactly support distance $k-2=2q-2$.  For every vertex $v\in V(G)$ take a support vertex $s_v$.  For each block $B$, let
\[
 P^B_1,\ldots,P^B_{r_B}
\]
be its partite classes.  Introduce a block centre $c_B$ and part hubs $h^B_1,\ldots,h^B_{r_B}$, each adjacent to $c_B$.  For every incidence $v\in B$ with $v\in P^B_i$, add an internally disjoint path of length $q-2$ from $s_v$ to $h^B_i$.

Again $R$ is a tree, because it is obtained from the block-vertex incidence tree by replacing each block vertex by a star and subdividing incidence edges.  If $u,v$ lie in different parts of the same block, then
\[
 \dist_R(s_u,s_v)=(q-2)+2+(q-2)=2q-2=k-2.
\]
If they lie in the same part of a common block, their support distance is $2q-4$.  If they do not lie in a common block, their path contains at least two block incidences, and hence has support length at least
\[
 2(2q-4)=4q-8>2q-2=k-2,
\]
because $q\ge4$.  Thus attaching one leaf to every $s_v$ gives an exact $k$-leaf root of $G$.
\end{proof}

\begin{corollary}[larger-distance subclass consequences]\label{cor:larger-distance-subclasses}
The following hold.
\begin{enumerate}[label=\textup{(\roman*)}]
\item Every complete bipartite graph is an exact $k$-leaf power for every $k\ge3$.
\item Every complete multipartite graph is an exact $k$-leaf power for every even $k\ge4$.
\item Every block-complete bipartite graph is an exact $k$-leaf power for every odd $k\ge5$ and every even $k\ge8$.
\item A connected block graph is an exact $k$-leaf power, for $k\ge3$, if and only if $k$ is even or the block graph is a tree.
\end{enumerate}
\end{corollary}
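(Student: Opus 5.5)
Each of the four items follows from \Cref{thm:complete-multipartite-all-k}, \Cref{thm:block-complete-larger-k}, \Cref{thm:block-complete-multipartite} and \Cref{lem:distance-lifting}. I would treat them in order.

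For (i), a complete bipartite graph is $K_{n_1,n_2}$, or $K_{n_1}$ edgeless if one side is empty. \Cref{thm:complete-multipartite-all-k} with $s\le2$ gives representability for every $k\ge3$, odd or even. For (ii), the same theorem with $k$ even and $k\ge4$ gives representability for every number $s$ of parts. For (iii), odd $k\ge5$ is covered by \Cref{thm:block-complete-larger-k}(i), since a block-complete bipartite graph is bipartite. Even $k\ge8$ is covered by \Cref{thm:block-complete-larger-k}(ii), since complete bipartite blocks are complete multipartite. One can also get the odd case from \Cref{thm:block-complete-multipartite} at $k=5$ together with \Cref{lem:distance-lifting}. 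The only point to note is that the corollary concerns connected graphs, matching the hypothesis of those theorems.

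Item (iv) is the only one that needs a new argument. A connected block graph has every block complete, and $K_m$ is complete multipartite with $m$ singleton parts, so block graphs are block-complete multipartite.

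For necessity with odd $k$: exact odd-leaf powers are bipartite, by the parity argument in the proof of \Cref{thm:complete-multipartite-all-k}. A bipartite block graph has every block a complete graph on at most two vertices, i.e.\ $K_2$ (or $G=K_1$), so $G$ is a tree.

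For sufficiency, the cases are:
\begin{itemize}
\item \emph{Trees, odd $k\ge5$:} use \Cref{thm:block-complete-larger-k}(i).
\item \emph{Trees, $k=3$:} every block of a tree is $K_2$, so a tree is block-complete bipartite, but the earlier theorems start at $k=5$. I would build a direct exact-$3$ root: take the tree itself as support tree, support distance $1$, and attach a leaf to each vertex. Then $\dist_T(u,v)=\dist_{\mathrm{tree}}(u,v)+2$ equals $3$ exactly on tree edges.
\item \emph{Even $k\ge8$:} use \Cref{thm:block-complete-larger-k}(ii).
\end{itemize}

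The main obstacle is even $k\in\{4,6\}$, which \Cref{thm:block-complete-larger-k} does not cover. Here I would exploit singleton parts with a construction for support distance $d=k-2\in\{2,4\}$. Give each block $B$ a centre $c_B$ and join each vertex support $s_v$ to $c_B$ by a path of length $d/2$. Then two vertices of the same block are at support distance $d$. Vertices in different blocks sharing a cutvertex $x$ are at support distance $d/2+d/2+d/2+d/2=2d\ne d$.

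The case $k=4$ is the delicate one: with $d=2$ and paths of length $1$, the supports $s_v$ sit adjacent to the centres $c_B$. I must check that no unintended pair lands at distance $2$: a centre is not a graph vertex, and vertex supports in different blocks are at distance at least $4$, so this works. For $k=6$ the same check gives $4\cdot2=8\ne4$. Hence the star construction succeeds for all even $k\ge4$, and the only real care is the case distinction at the small distances.
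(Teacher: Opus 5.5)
Your proposal is correct. Items (i)--(iii) are read off from \Cref{thm:complete-multipartite-all-k} and \Cref{thm:block-complete-larger-k} exactly as in the paper. The difference is in (iv).

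For even $k$, the paper builds nothing. It cites the exact-$4$ characterization of Brandstaedt, Le and Rautenbach to get that every block graph is an exact $4$-leaf power, and then applies \Cref{lem:distance-lifting}. For odd $k$, it takes the exact-$3$ root of a tree and lifts.

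You instead write down an explicit support tree: one centre per block, with incidence paths of length $(k-2)/2$. You then verify that pairs in a common block are at support distance $k-2$, while all other pairs are at distance at least $2(k-2)$. This makes (iv) self-contained and removes the dependence on the external exact-$4$ theorem; at $k=4$ your tree is just an explicit exact-$4$ root of the block graph. The paper's version is shorter.

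Your case analysis can be streamlined in the same spirit as the paper. The block-star construction already works uniformly for all even $k\ge4$, as you note at the end, so splitting off $k\ge8$ and invoking \Cref{thm:block-complete-larger-k}(ii) is unnecessary. Likewise, your exact-$3$ root plus \Cref{lem:distance-lifting} covers every odd $k\ge3$ without \Cref{thm:block-complete-larger-k}(i).

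One small check is worth recording when you attach the graph leaves. Each centre $c_B$ has degree $|B|\ge2$ and each subdivision vertex has degree two. So the only leaves of the support tree are vertex supports, and the final tree has leaf set exactly $V(G)$.
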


\begin{proof}
Parts (i) and (ii) are immediate from \Cref{thm:complete-multipartite-all-k}.  Part (iii) follows from \Cref{thm:block-complete-larger-k}.  For (iv), every block graph is an exact $4$-leaf power by the exact-$4$ characterization of Brandstaedt, Le and Rautenbach \cite[Theorem~2]{BLR2010}, and \Cref{lem:distance-lifting} gives all even $k\ge4$.  If $k$ is odd, then every exact $k$-leaf power is bipartite; a bipartite block graph has only $K_2$ blocks, hence is a tree.  Conversely, a tree has an exact $3$-leaf root obtained by attaching one new leaf to every vertex of the tree, and \Cref{lem:distance-lifting} gives all odd $k\ge3$.
\end{proof}

\subsection{The exact-six boundary and rigid short shadows}\label{subsec:k6-boundary}

The even theorem above deliberately starts at $k=8$.  At $k=6$ the support distance is four, and two distance-two same-part shadows through adjacent blocks may collide and create a new edge.  We isolate the local mechanism here.  The material in this subsection is not used in the exact-$5$ structure theorems; it explains why the boundary case is genuinely different and records the first unavoidable obstruction for any complete $k=6$ classification of block-complete multipartite graphs.

Let $T$ be an exact $6$-leaf root of a graph $G$, and delete the graph leaves from $T$.  If $s_v$ denotes the neighbour of the leaf $v$ in $T$, then the support tree $R$ satisfies
\[
 uv\in E(G) \quad\Longleftrightarrow\quad \dist_R(s_u,s_v)=4.
\]
Thus $k=6$ is the support-distance-four case.  A single complete multipartite block
\[
 B=K_{P_1,\ldots,P_r}
\]
is represented by a centre $c_B$, one part hub $h_i$ adjacent to $c_B$ for each part $P_i$, and support vertices for the vertices of $P_i$ adjacent to $h_i$.  Vertices in different parts are then at support distance four, while vertices in the same part are at support distance two.  In a block decomposition, the latter distance-two shadows are harmless inside one block but may be dangerous at cutvertices.

\begin{definition}[rigid incidence]
Let $G$ be a block-complete multipartite graph.  If $B$ is a block and $x$ is a cutvertex of $G$ contained in $B$, let $P_B(x)$ denote the partite class of $B$ containing $x$.  We call the incidence $(x,B)$ \emph{rigid} if $B$ has at least three partite classes and $|P_B(x)|\ge2$.
\end{definition}

The terminology comes from the following elementary tree-metric fact.  In a support-distance-four representation of a complete multipartite block with at least three partite classes, a vertex in the same part as a cutvertex is forced to sit at support distance two from that cutvertex, on the same side of the block centre.

\begin{lemma}[short shadows forced by rigid incidences]\label{lem:k6-rigid-shadow}
Let $G$ be a block-complete multipartite graph with a support tree $R$ in which adjacency is exactly support distance four.  Let $(x,B)$ be a rigid incidence, and let $a\in P_B(x)\setminus\{x\}$.  Then
\[
 \dist_R(s_a,s_x)=2.
\]
More precisely, if $u$ and $v$ are vertices in two partite classes of $B$ different from $P_B(x)$, and $m_B$ is the median of $s_x,s_u,s_v$, then $s_a$ lies at distance two from $m_B$ in the same component of $R-m_B$ as $s_x$.
\end{lemma}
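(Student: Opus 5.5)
The plan is a direct tree-metric computation on the tripod spanned by $s_x,s_u,s_v$. First fix the local configuration. Since $B$ has at least three partite classes, we may choose $u$ and $v$ in two classes of $B$ different from $P_B(x)$. Then $x,u,v$ are pairwise adjacent, so $s_x,s_u,s_v$ are pairwise at support distance four. Let $m_B$ be their median, with leg lengths $d_x,d_u,d_v$. The equations $d_x+d_u=d_x+d_v=d_u+d_v=4$ give $d_x=d_u=d_v=2$. The vertex $a$ lies in $P_B(x)$, so $a$ is adjacent to $u$ and $v$ but not to $x$. Hence
\[
 \dist_R(s_a,s_u)=\dist_R(s_a,s_v)=4,\qquad \dist_R(s_a,s_x)\ne4.
\]

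Next locate $s_a$ relative to the tripod $S$, the subtree spanned by $s_x,s_u,s_v$. Let $p$ be the vertex of $S$ nearest to $s_a$, and put $h=\dist_R(s_a,p)$. Then the distance from $s_a$ to any point of $S$ is $h$ plus the distance from $p$ within $S$. I split into cases according to the leg containing $p$, writing $t=\dist_R(p,m_B)\in\{0,1,2\}$.
\begin{itemize}
\item If $p$ lies on the $u$-leg, then $h+(2-t)=4$ and $h+t+2=4$. These force $t=0$, $h=2$, and then $\dist_R(s_a,s_x)=4$, contradicting the nonedge $ax$.
\item The $v$-leg is symmetric.
\item If $p$ lies on the $x$-leg, then $h+t+2=4$ gives $h=2-t$, so $\dist_R(s_a,s_x)=h+(2-t)=4-2t$. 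The nonedge $ax$ excludes $t=0$. This leaves $t=1$, giving distance two, or $t=2$, giving $s_a=s_x$.
\end{itemize}

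The main remaining obstacle is to exclude $s_a=s_x$, and this is where the cutvertex hypothesis enters. If two vertices share a support, they have identical distances to all other supports, so they are false twins (as in the proof of \Cref{prop:reduction}). Since $x$ is a cutvertex, it has a neighbour $y$ in some other block $B'\ne B$. If $a$ were adjacent to $y$, then $a-y-x-u-a$ would be a cycle through $y$ and vertices of $B$, placing $y$ in $B$, a contradiction. So $y\in N(x)\setminus N(a)$, $a$ and $x$ are not false twins, and $s_a\ne s_x$.

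Therefore $t=1$ and $h=1$. The point $p$ is the neighbour of $m_B$ on the path to $s_x$, and $s_a$ hangs at distance one from $p$ off the tripod, or coincides with $p$ in the degenerate case $h=0$ along the leg. Either way $\dist_R(s_a,s_x)=2$, $\dist_R(s_a,m_B)=t+h=2$, and the path from $s_a$ to $m_B$ enters $m_B$ through $p$, i.e.\ from the component of $R-m_B$ containing $s_x$. This gives both assertions of the lemma.
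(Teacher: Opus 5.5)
Your proof is correct and follows essentially the paper's route: the median $m_B$ with three legs of length two, locating $s_a$ from $\dist_R(s_a,s_u)=\dist_R(s_a,s_v)=4$ and $\dist_R(s_a,s_x)\ne4$, and excluding $s_a=s_x$ because $a$ would then inherit the neighbour of $x$ outside $B$. Your projection case analysis spells out the paper's one-line tree fact that the points at distance four from both $s_u$ and $s_v$ form the distance-two sphere about $m_B$ off their two branches, and your $4$-cycle argument supplies the justification, left implicit in the paper, that such a cross-block edge $ay$ cannot exist.

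One small slip: once $h=1$ is forced, the remark that $s_a$ might coincide with $p$ ``in the degenerate case $h=0$'' is vacuous, since that case would even give $\dist_R(s_a,m_B)=1$, so it should simply be deleted.
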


\begin{proof}
The vertices $x,u,v$ lie in three distinct partite classes of $B$, so their supports are pairwise at distance four.  Hence their median $m_B$ satisfies
\[
 \dist_R(m_B,s_x)=\dist_R(m_B,s_u)=\dist_R(m_B,s_v)=2.
\]
Since $a$ is adjacent to $u$ and to $v$, $s_a$ is at distance four from both $s_u$ and $s_v$.  In a tree, the points at distance four from both $s_u$ and $s_v$ are precisely the points at distance two from $m_B$ outside the two components of $R-m_B$ containing $s_u$ and $s_v$.  Since $a$ and $x$ are in the same partite class of $B$, $s_a$ is not at distance four from $s_x$.  Therefore $s_a$ must lie in the same component of $R-m_B$ as $s_x$ and at distance two from $m_B$.  Because $x$ is a cutvertex, $x$ has a neighbour outside $B$.  If $s_a=s_x$, then $a$ would be at support distance four from every support-neighbour of $x$ in an incident block, creating cross-block edges.  Thus $s_a\ne s_x$, and the two vertices at distance two from $m_B$ in the same component are separated by distance two.
\end{proof}

The forced short-shadow phenomenon gives a useful sufficient condition.  We record it as the first formal boundary theorem for the exact-six case: it supplies a large positive family at $k=6$ and separates this boundary from the unresolved full compatibility problem.

\begin{definition}[admissible exact-six orientation]\label{def:k6-admissible}
Let $G$ be a block-complete multipartite graph.  For every complete bipartite block $B=K_{P,Q}$ choose one of $P,Q$ as the \emph{short side} of $B$.  For a block with at least three partite classes, every non-singleton part is declared short.  A cutvertex $x$ is \emph{exposed in an incident block $B$} if $P_B(x)$ is short and $|P_B(x)|\ge2$.  The orientation is \emph{admissible} if every cutvertex is exposed in at most one incident block.
\end{definition}

\begin{theorem}[admissible exact-six orientation is sufficient]\label{thm:k6-sufficient}
Let $G$ be a connected block-complete multipartite graph.  If $G$ has an admissible exact-six orientation, then $G$ is an exact $6$-leaf power.
\end{theorem}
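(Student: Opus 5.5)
The plan is to build, via \Cref{thm:support-model}, a support tree $R$ in which adjacency is exactly support distance four. As in the proofs of \Cref{thm:block-complete-multipartite,thm:block-complete-larger-k}, I would give one local gadget per block and glue the gadgets along the block--cutvertex incidence tree, identifying the support $s_x$ of a cutvertex across all of its incident blocks. Every graph vertex gets its own support vertex $s_v$. The orientation decides which same-part vertices sit at support distance $2$ (the short shadows) and which are pushed far apart.

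\textbf{Gadgets.} For a complete bipartite block $B=K_{P,Q}$ with chosen short side $P$, introduce adjacent hubs $\alpha_B\beta_B$. Make each $s_p$ ($p\in P$) adjacent to $\alpha_B$. Join each $s_q$ ($q\in Q$) to $\alpha_B$ by an internally disjoint path of length $3$ through $\beta_B$, namely $\alpha_B\beta_B w_q s_q$ with a private middle vertex $w_q$. This gives the following distances:
\begin{itemize}
\item $\dist(s_p,s_q)=4$ for $p\in P$, $q\in Q$;
\item $\dist(s_p,s_{p'})=2$ for distinct $p,p'\in P$;
\item $\dist(s_q,s_{q'})=4$ for distinct $q,q'\in Q$, since the path is $s_q w_q\beta_B w_{q'} s_{q'}$.
\end{itemize}
The last distance is bad. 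So I would instead attach the $Q$-vertices by private paths of length $3$ hanging from $\alpha_B$, each with its own first internal vertex. Then $\dist(s_p,s_q)=4$, $\dist(s_p,s_{p'})=2$ and $\dist(s_q,s_{q'})=6$, so the block is represented exactly.

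For a block with at least three classes, take a centre $c_B$. For each non-singleton part $P_i$, take a hub $h_i$ adjacent to $c_B$ and make the supports of $P_i$ adjacent to $h_i$. For each singleton part, join its vertex to $c_B$ by a private path of length $2$. Then different parts are at distance $4$ and same-part pairs are at distance $2$. In every gadget, the support of a vertex $x$ is at distance exactly $2$ from another vertex of the block iff $P_B(x)$ is short and $|P_B(x)|\ge2$, that is, iff $x$ is exposed in $B$. All other in-block distances from $s_x$ are $4$ or $6$.

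\textbf{Gluing and verification.} $R$ is a tree because it arises from the block--vertex incidence tree by replacing each block vertex with a tree gadget and subdividing incidences. In-block pairs are correct by the gadget computation. Now take $u,v$ sharing no block. The $R$-path from $s_u$ to $s_v$ passes through the supports of the cutvertices $x_1,\dots,x_m$ ($m\ge1$) on the block-tree path between them. It therefore splits into $m+1$ segments, each joining supports of two distinct vertices of a common block, so each segment has length at least $2$. If $m\ge2$ the total is at least $6$. If $m=1$, the total is $\dist(s_u,s_x)+\dist(s_x,s_v)$ with each term in $\{2,4,6\}$. It equals $4$ only if both terms are $2$, which would make $x$ exposed in both incident blocks, and admissibility forbids this. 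Hence no cross-block pair is at support distance $4$. Attaching one leaf to each $s_v$ then gives an exact $6$-leaf root.

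The main obstacle is designing the bipartite gadget so that the long side does not accidentally produce distance $4$ among its own vertices, while a long-side cutvertex still contributes no distance-$2$ shadow. The private length-$3$ paths from $\alpha_B$ are exactly what I expect to resolve this. After that, the cross-block check reduces to the $m=1$ case, which is where admissibility is used.
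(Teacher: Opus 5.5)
Your proposal is correct and is essentially the paper's proof: your corrected bipartite gadget (short side adjacent to one hub, long side on private length-$3$ paths) is the paper's $m_B$ gadget with $\alpha_B$ in the role of $m_B$, your multipartite gadget coincides with the paper's centre-and-part-hub gadget, and your segment decomposition spells out the paper's cross-block argument (support distance four forces two length-two segments at one cutvertex, i.e.\ a cutvertex exposed in two blocks). One cosmetic fix: delete the now-unused hub $\beta_B$, since otherwise it is a non-support leaf of $R$ and hence a spurious non-graph leaf of the root.
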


\begin{proof}
We construct a support tree $R$ in which adjacency is exactly support distance four; adding one leaf adjacent to each support vertex gives an exact $6$-leaf root.

For each graph vertex $v$ introduce a support vertex $s_v$.  If $B$ is a block with at least three partite classes $P_1,\ldots,P_r$, introduce a centre $c_B$ and part hubs $h^B_1,\ldots,h^B_r$, each adjacent to $c_B$.  For every incidence $v\in P_i$, add the edge $s_vh^B_i$.

If $B=K_{P,Q}$ is bipartite and $P$ is the chosen short side, introduce a block hub $m_B$.  For $v\in P$, add the edge $s_vm_B$.  For $w\in Q$, add an internally disjoint path of length three from $s_w$ to $m_B$.  If $Q$ is the chosen short side, interchange the roles of $P$ and $Q$.

Gluing the gadgets along the common support vertices of cutvertices gives a tree, because the block-cut incidence graph of a connected graph is a tree and each block node has been replaced by a tree.  Inside a non-bipartite multipartite block, supports in different parts are at distance $1+2+1=4$, while supports in the same part have distance two.  Inside a bipartite block, a short-side support and a long-side support have distance $1+3=4$, while two short-side supports have distance two and two long-side supports have distance six.  Thus each block is represented correctly.

It remains to exclude cross-block distance-four pairs.  Let $u$ and $v$ lie in different blocks.  The path from $s_u$ to $s_v$ in $R$ passes through the support of at least one cutvertex $x$.  The only way for its total length to be four is that, at some such cutvertex $x$, the two incident block segments on the path both have length two.  A length-two segment from $s_x$ into a block exists exactly when $x$ is exposed in that block.  The admissibility condition forbids two such exposed incidences at the same cutvertex.  Hence no cross-block pair has support distance four.  Therefore $R$ represents exactly $G$ at support distance four.
\end{proof}

The next result is a complete boundary theorem for the first nontrivial local case: all blocks share one cutvertex.  It shows exactly how many rigid short shadows can be paired at a single centre; see \Cref{fig:k6-fan-boundary}.

\begin{definition}[multipartite block fan]
A \emph{multipartite block fan} is a connected block-complete multipartite graph with a distinguished cutvertex $x$ such that every block contains $x$ and any two distinct blocks meet exactly in $x$.  Thus $x$ is the only cutvertex.  Let
\[
 \rho_x(G)=|\{B: \text{$B$ has at least three partite classes and } |P_B(x)|\ge2\}|.
\]
The blocks counted by $\rho_x(G)$ are the rigid blocks at the centre of the fan.

\end{definition}

\begin{figure}[H]
\centering
\begin{tikzpicture}[>=Latex, every node/.style={font=\small}, box/.style={draw,rounded corners,align=center,minimum width=1.55cm,minimum height=.68cm,fill=gray!6}]
\node at (0,1.95) {(a) pairable boundary};
\node[circle,draw,inner sep=1.2pt,label=below:$x$] (xl) at (0,0) {};
\node[box] (l1) at (-1.25,.9) {$B_1$};
\node[box] (l2) at (1.25,.9) {$B_2$};
\draw (xl)--(l1) (xl)--(l2);
\node[font=\scriptsize,align=center] at (0,-.72) {$\rho_x=2$; two rigid shadows\\can be centre-paired};
\node[font=\small] at (0,-1.45) {exact $6$};

\begin{scope}[xshift=5.8cm]
\node at (0,1.95) {(b) fan obstruction};
\node[circle,draw,inner sep=1.2pt,label=below:$x$] (xr) at (0,0) {};
\node[box] (r1) at (-1.55,.9) {$B_1$};
\node[box] (r2) at (0,1.25) {$B_2$};
\node[box] (r3) at (1.55,.9) {$B_3$};
\draw (xr)--(r1) (xr)--(r2) (xr)--(r3);
\node[font=\scriptsize,align=center] at (0,-.72) {$\rho_x=3$; three rigid shadows\\force a cross-block distance four};
\node[font=\small] at (0,-1.45) {not exact $6$};
\end{scope}
\end{tikzpicture}
\caption{The exact-six fan boundary.  Each box denotes a rigid complete tripartite block sharing the cutvertex $x$.  In a one-cutvertex fan, at most two rigid blocks can be absorbed by centre-pairing; three already force an unintended support-distance-four pair.}
\label{fig:k6-fan-boundary}
\end{figure}

\begin{theorem}[the exact-six fan theorem]
\label{thm:k6-fan}
Let $G$ be a multipartite block fan with centre $x$.  Then
\[
 G \text{ is an exact }6\text{-leaf power}
 \quad\Longleftrightarrow\quad
 \rho_x(G)\le2 .
\]
\end{theorem}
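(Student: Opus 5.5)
The proof splits into a sufficiency construction for $\rho_x(G)\le2$ and a necessity argument for $\rho_x(G)\ge3$. Both rest on \Cref{lem:k6-rigid-shadow} and on the single-cutvertex geometry at $s_x$. By \Cref{thm:support-model} it suffices to work with support trees in which adjacency is exactly support distance four. Since every block of a fan meets the rest of $G$ only at $x$, every cross-block path between supports passes through $s_x$. The local picture at $s_x$ is therefore the whole story: we must control which neighbours of $s_x$ carry shadows, meaning vertices at support distance two from $s_x$, and in which blocks.

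\emph{Sufficiency.} First treat the nonrigid blocks as in the proof of \Cref{thm:k6-sufficient}.
\begin{itemize}
\item A bipartite block $K_{P,Q}$ with $x\in P$ is oriented so that $Q$ is the short side whenever $|P|\ge2$. Then $x$ is not exposed in it, and its gadget leaves $s_x$ by a path of length three to the hub $m_B$.
\item A block with at least three parts in which $P_B(x)=\{x\}$ is also harmless. There $s_x$ is a leaf on its own part hub, and every other support of that block lies at distance at least three from $s_x$ on the far side of $c_B$.
\end{itemize}
Consequently all supports of nonrigid blocks lie at distance at least three from $s_x$. The remaining vertices at distance one from $s_x$ are the part hubs of those blocks, which are not supports and create no pairs.

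The content of the positive direction is the centre-pairing of the at most two rigid blocks $B_1,B_2$. By \Cref{lem:k6-rigid-shadow}, each has its shadows on a neighbour $h_i$ of $s_x$, with the block median $m_{B_i}$ adjacent to $h_i$. I would try to place both rigid gadgets so that their shadow sets are mutually at distance different from four. The first attempt is to let the two rigid blocks share one hub $h=h_1=h_2$, with separate centres $m_{B_1},m_{B_2}$ adjacent to $h$. The step I would check most carefully is whether a shadow $a_1$ of $B_1$ then lands at support distance four from the non-$x$ parts of $B_2$, via the path $a_1,h,m_{B_2},\cdot,\cdot$. If that creates a forbidden pair, I would lengthen the part-hub paths on one side of $B_2$, in the spirit of the bipartite gadget, to break the collision. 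Once the pairing gadget is fixed, a final distance check confirms that no distance-four pair arises. That check covers each pair of gadgets meeting at $s_x$, namely rigid--rigid, rigid--nonrigid and nonrigid--nonrigid, and runs through the segment lengths (one, two, three or more) leaving $s_x$.

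\emph{Necessity.} Suppose $\rho_x(G)\ge3$ and fix three rigid blocks $B_1,B_2,B_3$. For each $i$, choose $a_i\in P_{B_i}(x)\setminus\{x\}$ and vertices $u_i,v_i$ in two other parts of $B_i$. By \Cref{lem:k6-rigid-shadow}, $s_{a_i}$ and $s_x$ have a common neighbour $h_i$, and $m_{B_i}$ is adjacent to $h_i$ on the side away from $s_x$.

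The argument then compares any two of the three blocks. If $h_i\ne h_j$, then $a_i,h_i,s_x,h_j,a_j$ is a path of length four, giving the forbidden edge $a_ia_j$. If $h_i=h_j$, then $s_{a_i}$ is at distance four from $s_{u_j}$ or $s_{v_j}$ through $h_i,m_{B_j}$, unless the two blocks are arranged in the special paired configuration found in the sufficiency step. The pigeonhole step is to show that this paired configuration can absorb at most two blocks at a common hub. With three blocks, some pair must fall into one of the two colliding cases, so the represented graph contains an edge between distinct blocks of the fan, which is a contradiction.

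\emph{Main obstacle.} Everything hinges on isolating exactly the configuration in which two rigid shadows sharing the hub next to $s_x$ avoid distance four. It must be built explicitly in the sufficiency direction, and shown in the necessity direction not to extend to a third block. I would work this out first on the smallest instance, three copies of $K_{2,1,1}$ glued at a vertex of their size-two part, before writing the general argument.
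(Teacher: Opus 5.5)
\textbf{Gap: the two-rigid-block pairing is never constructed, and the repair you propose cannot work.} This is the step you yourself flag as the main obstacle. Suppose two rigid blocks $B_1,B_2$ share the hub $h$, and $s_{a_1}$ is simply a new neighbour of $h$. Then $s_{a_1},h,m_{B_2},\ldots,s_{u_2}$ is a geodesic of length four, exactly as you fear.

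You cannot fix this by lengthening part-hub paths of $B_2$. Take $u,v$ in two distinct parts of a rigid block other than $P_B(x)$. The supports $s_x,s_u,s_v$ are pairwise at distance four, so their median is at distance exactly two from each. Hence every support of a non-$x$ part of $B_2$ is forced to lie at distance exactly two from $m_{B_2}$; unlike the bipartite gadget, there is no slack. Since \Cref{lem:k6-rigid-shadow} also forces $s_{a_1}$ to be a neighbour of $h$, the only escape is $s_{a_1}=m_{B_2}$. The shadows of each rigid block must sit \emph{on the centre of the other rigid block}.

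This is the paper's gadget:
\begin{itemize}
\item a node $q$ adjacent to $s_x$, and centres $c_B,c_C$ both adjacent to $q$;
\item the part hubs of the non-$x$ parts hang from their own block's centre;
\item $s_a=c_C$ for $a\in P_B(x)\setminus\{x\}$, and $s_a=c_B$ for $a\in P_C(x)\setminus\{x\}$.
\end{itemize}
Then a shadow is at distance $4$ from the other parts of its own block and at distance $2$ from every vertex of the partner block other than $x$. Supports of the non-$x$ parts of the two blocks are at distance $6$. In this representation, cross-block paths between $B$ and $C$ run through $q$, not $s_x$. So your opening claim that every cross-block path passes through $s_x$ is false for the representation that actually works, and you should not rely on it in the distance check.

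\textbf{The same observation supplies the pigeonhole step your necessity sketch leaves open.} Your case $h_i\ne h_j$ is correct, so you may assume $h_1=h_2=h_3=:h$. Fix $i\ne j$. The support $s_{u_j}$ lies beyond $m_{B_j}$, in a component of $R-m_{B_j}$ not containing $h$. So the walk $s_{a_i},h,m_{B_j},\ldots,s_{u_j}$ is a geodesic of length four unless $s_{a_i}=m_{B_j}$. Hence $s_{a_i}=m_{B_j}$ for all $i\ne j$. This gives $s_{a_1}=m_{B_2}=m_{B_3}$ and $s_{a_2}=m_{B_1}=m_{B_3}$, hence $m_{B_1}=s_{a_1}$. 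That contradicts $\dist_R(s_{a_1},m_{B_1})=2$.

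Your insistence that the paired configuration be handled explicitly is well placed. The paper's own necessity step asserts the length $1+1+2=4$ for every $i\ne j$. It never rules out that one block's shadow coincides with another block's centre. Yet that is exactly what happens in the paper's two-block construction, so this pigeonhole argument is genuinely needed.
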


\begin{proof}
We work in support-distance-four form.  First assume $\rho_x(G)\le2$ and construct a support tree $R$.

If $B=K_{P,Q}$ is bipartite and $x\in P$, make the $x$-side long.  Introduce a block hub $m_B$ and a path of length three from $s_x$ to $m_B$.  For every $v\in P\setminus\{x\}$ add an internally disjoint path of length three from $s_v$ to $m_B$, and for every $w\in Q$ add the edge $s_wm_B$.  Then opposite sides have support distance four, the $P$-side has support distance six, and the $Q$-side has support distance two.

If $B$ has at least three partite classes and $P_B(x)=\{x\}$, introduce a centre $c_B$ and a path $s_xq_Bc_B$.  For every other part $P$ of $B$, introduce a hub $h^B_P$ adjacent to $c_B$, and attach all supports $s_v$, $v\in P$, to $h^B_P$.  Then $x$ is at support distance four from every other vertex of $B$, and two non-central vertices are at support distance four exactly when they lie in different parts.

If $B$ is the only rigid block at $x$, use the same centre-path $s_xq_Bc_B$.  For the part $P_B(x)$, attach every support $s_a$ with $a\in P_B(x)\setminus\{x\}$ to $q_B$.  For all other parts use hubs adjacent to $c_B$ as above.  Vertices in $P_B(x)$ are at support distance two from $s_x$ and from each other, and they are at support distance four from every vertex in the other parts of $B$.

It remains to describe the case of two rigid blocks, say $B$ and $C$.  Introduce a node $q$ adjacent to $s_x$, and introduce two centres $c_B$ and $c_C$, both adjacent to $q$.  For every part of $B$ different from $P_B(x)$ attach a part hub to $c_B$, and attach the supports of vertices in that part to the hub.  For every vertex $a\in P_B(x)\setminus\{x\}$ use the support $s_a=c_C$.  Symmetrically, for every vertex $a\in P_C(x)\setminus\{x\}$ use the support $s_a=c_B$, and attach the other parts of $C$ to hubs adjacent to $c_C$.

All remaining non-rigid blocks are attached to $s_x$ by the first two constructions above.  The resulting graph $R$ is a tree: it is obtained by gluing block gadgets at the single support $s_x$, except in the paired rigid case where the two rigid centres share the node $q$.

We now verify distances.  Each bipartite block is represented correctly by the long/short construction.  A non-rigid multipartite block with $P_B(x)=\{x\}$ is represented correctly by its centre $c_B$.  A single unpaired rigid block is represented correctly by the path $s_xq_Bc_B$.  In the paired case, consider block $B$.  A vertex in $P_B(x)\setminus\{x\}$ has support $c_C$, hence it is at distance two from $s_x$ and at distance four from any support in a different part of $B$, because the path is
\[
 c_C-q-c_B-h^B_P-s_v .
\]
It is not at distance four from vertices of $C$: it is at distance two from the supports in the non-central parts of $C$, and at distance two from the supports in $P_C(x)\setminus\{x\}$.  The same argument holds with $B$ and $C$ interchanged.  Finally, every support in a non-rigid block different from $x$ is at distance at least four from $s_x$, while every rigid paired mate is at distance two from $s_x$ and every rigid non-mate support is at distance four from $s_x$.  Hence a path between supports belonging to different blocks has length different from four, except when the two vertices are both in a common block already handled above.  Thus $R$ represents exactly $G$ at support distance four, and $G$ is an exact $6$-leaf power.

Conversely, suppose that $G$ has a support tree $R$ representing adjacency by support distance four, and assume that $\rho_x(G)\ge3$.  Choose three rigid blocks $B_1,B_2,B_3$.  For each $i$, choose
\[
 a_i\in P_{B_i}(x)\setminus\{x\},
\]
and choose vertices $b_i,c_i$ in two partite classes of $B_i$ different from $P_{B_i}(x)$.  By \Cref{lem:k6-rigid-shadow}, if $m_i$ is the median of $s_x,s_{b_i},s_{c_i}$, then the path from $m_i$ to $s_x$ has the form
\[
 m_i-u_i-s_x,
\]
and $s_{a_i}$ is adjacent to $u_i$.

If $u_i\ne u_j$ for some $i\ne j$, then
\[
 \dist_R(s_{a_i},s_{a_j})=4,
\]
which would create the cross-block edge $a_ia_j$.  Hence $u_1=u_2=u_3=:u$.  Now take $i\ne j$.  Since $m_j$ is adjacent to $u$ and $s_{b_j}$ is at distance two from $m_j$, the path from $s_{a_i}$ to $s_{b_j}$ has length
\[
 1+1+2=4.
\]
Thus $a_i b_j$ would be an edge, again impossible because the vertices lie in different blocks of the fan.  This contradiction proves $\rho_x(G)\le2$.
\end{proof}

\begin{corollary}[the rigid fan obstruction]
\label{cor:k6-rigid-fan}
For $r\ge3$, let $F_r$ be the graph obtained from blocks
\[
 B_i\cong K_{\{x,a_i\},\{b_i\},\{c_i\}}, \qquad 1\le i\le r,
\]
by identifying all copies of $x$ and otherwise keeping the blocks disjoint.  Then $F_r$ is not an exact $6$-leaf power, while $F_1$ and $F_2$ are exact $6$-leaf powers.
\end{corollary}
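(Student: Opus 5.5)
This will follow directly from \Cref{thm:k6-fan}. The plan is to check that $F_r$ is a multipartite block fan with centre $x$ and to compute $\rho_x(F_r)=r$.

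First, the fan structure. Each $B_i\cong K_{\{x,a_i\},\{b_i\},\{c_i\}}$ is a complete tripartite graph on four vertices. It is $2$-connected: $K_4$ minus the edge $xa_i$ stays connected after deleting any single vertex. So each $B_i$ lies inside a single block of $F_r$.

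Next, check that the $B_i$ are exactly the blocks. The copies share only $x$, so any cycle meeting two different $B_i$ would have to pass through $x$ twice, which is impossible. Hence the blocks of $F_r$ are precisely $B_1,\ldots,B_r$. Every one of them contains $x$, and two distinct ones meet exactly in $x$. Thus $F_r$ is connected, block-complete multipartite, and a multipartite block fan with centre $x$ (for $r\ge2$ it is the only cutvertex; for $r=1$ we simply use $x$ as the distinguished vertex).

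Now compute $\rho_x$. Each $B_i$ has three partite classes and $P_{B_i}(x)=\{x,a_i\}$, which has size two. So every block is rigid at $x$, and $\rho_x(F_r)=r$.

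Applying \Cref{thm:k6-fan}: for $r\ge3$ we have $\rho_x>2$, so $F_r$ is not an exact $6$-leaf power. For $r=1,2$ we have $\rho_x\le2$, so $F_1$ and $F_2$ are exact $6$-leaf powers.

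The only point needing care is $F_1=B_1$, where $x$ is not literally a cutvertex. The paired and single-rigid-block constructions in the proof of \Cref{thm:k6-fan} never use the cutvertex property. Still, I would confirm $F_1$ directly with an explicit tree. Take the path $s_x-q-c$, attach $s_{a_1}$ to $q$, and hang $s_{b_1}$ and $s_{c_1}$ from separate hubs adjacent to $c$. Then
\[
\dist(s_x,s_{a_1})=2,\qquad \dist(s_{b_1},s_{c_1})=4,
\]
and each of $s_x,s_{a_1}$ is at distance four from each of $s_{b_1},s_{c_1}$. This is exactly the edge set of $B_1$. Apart from this degenerate check, there is no real obstacle.
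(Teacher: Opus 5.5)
Your proposal is correct and matches the paper, which gives no separate proof of this corollary and simply reads it off from \Cref{thm:k6-fan} via $\rho_x(F_r)=r$. Your separate check of $F_1$ is a point the paper passes over: there $x$ is not a cutvertex, so $F_1$ is not literally a fan. Alternatively, $F_1\cong K_{2,1,1}$ is covered directly by \Cref{thm:complete-multipartite-all-k} with $k=6$.
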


\begin{remark}[towards the complete $k=6$ compatibility condition]
Corollary~\ref{cor:k6-rigid-fan} shows that the obstruction at $k=6$ is not caused by complete multipartite blocks themselves, but by how their non-singleton partite classes meet at cutvertices.  Complete bipartite blocks remain flexible: one side may be placed at support distance one from a block hub and the other at distance three, giving cross-part distance four without forcing both sides to create short shadows at every cutvertex.  A natural complete classification should therefore be a part-compatibility condition on the block-cut tree: rigid incidences must be oriented or paired so that no cutvertex receives incompatible distance-two shadows from several incident blocks.  We leave this compatibility theorem as a boundary problem rather than using it as an assumption in the present exact-$5$ theory.
\end{remark}

\begin{problem}[the exact-six boundary for multipartite block graphs]\label{prob:k6-boundary}
Find a necessary and sufficient compatibility condition, on the block-cut tree and the partite classes at cutvertices, for a block-complete multipartite graph to be an exact $6$-leaf power.  Theorem~\ref{thm:k6-sufficient} gives a sufficient short-side condition, while Theorem~\ref{thm:k6-fan} shows that, in a one-cutvertex fan, two rigid shadows can be paired but three cannot.
\end{problem}

\section{Stable blow-ups and odd-distance boundary subclasses}\label{sec:blowups-boundaries}

The preceding subclass theorems are closed under the standard stable false-twin expansion.  We record this as a separate all-$k$ statement because it converts every skeleton classification into a blow-up classification.  The restriction $k\ge3$ is essential: for $k=2$, two leaves attached to the same support would become adjacent.

\begin{lemma}[component closure]\label{lem:components-k}
Let $k\ge2$.  A graph is an exact $k$-leaf power if and only if all of its connected components are exact $k$-leaf powers.
\end{lemma}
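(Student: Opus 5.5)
The statement is the all-$k$ analogue of the componentwise part of \Cref{prop:reduction}, and I would prove it through \Cref{thm:support-model}.  Both directions then become statements about support-tree distance-$(k-2)$ representations.  Put $d=k-2\ge0$.

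\emph{Necessity} is immediate from \Cref{cor:induced-closure}.  Each connected component is an induced subgraph, so restricting a support representation of $G$ to the component's vertex set represents that component.

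\emph{Sufficiency.}  Let $G_1,\ldots,G_m$ be the components, and suppose each has a representation $(R_i,s_i)$.  I would build one tree $R$ by choosing a vertex $r_i\in V(R_i)$ in each tree, adding a new hub $z$, and joining $z$ to each $r_i$ by a path of length $L$.  The requirement is that no cross-component pair of supports lands at distance $d$.  For $u\in G_i$ and $v\in G_j$ with $i\ne j$, the path between their supports passes through $z$, so
\[
 \dist_R(s(u),s(v))\ge 2L.
\]
Taking $L$ with $2L>d$, for instance $L=d$ when $d\ge1$, makes every cross distance exceed $d$.  Distances within a component are unchanged, because $R_i$ is an isometric subtree of $R$.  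Thus $(R,s)$ represents the disjoint union, and \Cref{thm:support-model} converts it back into an exact $k$-leaf root.

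The only point needing care, and the one I expect to be the obstacle, is the degenerate small case.  For $k=2$ we have $d=0$, so adjacency means equal supports.  Here the joining argument still works with $L\ge1$, since cross supports are distinct, giving distance at least $2>0$.  So the construction needs no case split beyond choosing $L\ge\max(1,d)$.

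Two routine checks remain: the glued graph is a tree (a tree of trees joined by paths at a single hub), and the root's leaf set is exactly $V(G)$.
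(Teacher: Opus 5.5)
Your proof is correct. Its core is the same as the paper's: hang all pieces from a new hub by connectors long enough that every cross-component distance overshoots the target.

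What differs is where you glue.
\begin{itemize}
\item The paper glues the exact roots $T_i$ themselves. It therefore attaches at an internal vertex $a_i$, so that every graph leaf stays a leaf, and uses connectors of length $k+1$. It also needs a separate gadget for isolated vertices, whose root is a single vertex with no internal vertex.
\item You glue support trees via \Cref{thm:support-model}. You may attach at any vertex of $R_i$, and connectors of length $L$ with $2L>k-2$ suffice. Isolated components are absorbed uniformly as one-vertex support trees.
\end{itemize}

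The price is that you rely on \Cref{thm:support-model} in exactly the degenerate cases its proof passes over.
\begin{itemize}
\item For an isolated vertex, deleting the graph leaf from the one-vertex root leaves nothing. The one-vertex support tree therefore has to be supplied by hand.
\item The converse direction returns a tree whose leaf set is exactly $V(G)$ only if every leaf of $R$ carries a support. This is the content of your second ``routine check'', and it does hold here. For $m\ge2$, neither the hub nor the connector vertices are leaves, so every leaf of $R$ is already a leaf of some $R_i$ (a one-vertex $R_i$ counting as its own leaf). Such vertices are supports when $R_i$ is obtained by deleting the graph leaves of $T_i$. In general, pruning non-support leaves changes no support distance, so this is harmless either way.
\end{itemize}
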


\begin{proof}
The only-if direction follows from closure under induced subgraphs.  Conversely, let the components be $G_1,\ldots,G_m$, and let $T_i$ be an exact $k$-leaf root of $G_i$ whenever $|V(G_i)|\ge2$.  If $|V(G_i)|\ge2$, choose an internal vertex $a_i$ of $T_i$.  If $G_i$ is an isolated vertex $v_i$, replace $T_i$ by the path from the graph leaf $v_i$ to a new attachment vertex $a_i$ of length $k+1$.

For $m\ge2$, take a new central vertex $c$ and join every $a_i$ to $c$ by an internally disjoint path of length $k+1$, except that in the isolated-vertex case the already chosen path from $v_i$ to $a_i$ is kept and $a_i$ is joined to $c$.  No graph leaf ceases to be a leaf, and no new non-graph leaf is created.  Distances between leaves in the same component are unchanged.  Distances between leaves in different components are larger than $k$, by the two connector segments.  Hence the resulting tree represents the disjoint union.
\end{proof}

\begin{definition}[stable blow-up]\label{def:stable-blowup}
Let $H$ be a graph.  A \emph{stable blow-up} of $H$ is obtained by replacing each vertex $v\in V(H)$ by a nonempty stable set $I_v$, and by adding all edges between $I_u$ and $I_v$ whenever $uv\in E(H)$.  We denote such a graph by $H[\{I_v\}_{v\in V(H)}]$.
\end{definition}

\begin{theorem}[stable blow-ups]\label{thm:stable-blow-up}
Let $k\ge3$, and let $G$ be a stable blow-up of a graph $H$.  Then
\[
 G\text{ is an exact }k\text{-leaf power}
 \quad\Longleftrightarrow\quad
 H\text{ is an exact }k\text{-leaf power}.
\]
\end{theorem}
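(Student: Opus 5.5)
The proof has two directions, handled separately.

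For the direction ``$H$ exact $k$ $\Rightarrow$ $G$ exact $k$'', I will work with the support-tree form of \Cref{thm:support-model}. Take a support-tree distance-$(k-2)$ representation $(R,s)$ of $H$, and define $s'(w)=s(v)$ for every $w\in I_v$. For $w\in I_u$ and $w'\in I_v$ with $u\ne v$, the support distance is $\dist_R(s(u),s(v))$. This equals $k-2$ exactly when $uv\in E(H)$, which is exactly when $ww'\in E(G)$. For two distinct vertices $w,w'$ of the same $I_v$, the support distance is $0$, and $0\ne k-2$ because $k\ge3$; so they are correctly nonadjacent. This is precisely where the hypothesis $k\ge3$ enters. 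Equivalently, in the leaf root we attach all leaves of $I_v$ to the same support vertex, so they lie at leaf distance $2\ne k$.

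For the converse, ``$G$ exact $k$ $\Rightarrow$ $H$ exact $k$'', the plan is to exhibit $H$ as an induced subgraph of $G$ and then invoke \Cref{cor:induced-closure}. Choose one representative $w_v\in I_v$ for each $v\in V(H)$. By the definition of the stable blow-up, $w_uw_v\in E(G)$ if and only if $uv\in E(H)$. Hence the subgraph of $G$ induced on $\{w_v: v\in V(H)\}$ is isomorphic to $H$, and the corollary applies.

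I expect no real obstacle here. The only point needing care is the same-class distance in the first direction, where $k\ge3$ is essential: for $k=2$, twins sharing a support would be at leaf distance $2=k$ and so become adjacent.

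I would add a short remark after the proof. Combined with \Cref{thm:ladder,thm:block-complete-multipartite,thm:cocluster,thm:complete-multipartite-all-k}, the theorem gives the modular versions of each subclass classification immediately.
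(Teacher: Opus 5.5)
Your proof is correct and is essentially the paper's argument. In the forward direction the paper attaches every element of $I_v$ to the neighbour $a_v$ of the leaf $v$ in an exact $k$-leaf root of $H$; this $a_v$ is exactly your common support $s(v)$, and same-bag leaves lie at distance $2\ne k$. For the converse the paper likewise picks one representative per bag and uses closure under induced subgraphs. The only cosmetic difference is that the paper modifies the root directly, treating a one-vertex $H$ separately because its leaf has no neighbour, rather than passing through \Cref{thm:support-model}.
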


\begin{proof}
First suppose that $H$ has an exact $k$-leaf root $T$.  If $H$ has just one vertex, then its blow-up is edgeless; it is represented by the one-vertex tree when the bag has size one, and otherwise by a star whose leaves are the bag elements, since all pairwise leaf distances are $2\ne k$.  Otherwise every graph leaf of $T$ has a neighbour.  For a vertex $v\in V(H)$, let $a_v$ be the neighbour of the leaf $v$ in $T$.  Delete the leaf $v$ and attach all vertices of the stable bag $I_v$ as new leaves adjacent to $a_v$.  Doing this independently for all $v\in V(H)$ gives a tree $T'$.  If $x\in I_u$ and $y\in I_v$ with $u\ne v$, then
\[
 \dist_{T'}(x,y)=\dist_T(u,v),
\]
so $xy$ is an edge exactly when $uv\in E(H)$.  If $x,y$ lie in the same bag, then $\dist_{T'}(x,y)=2\ne k$, because $k\ge3$; hence each bag remains stable.  Thus $T'$ is an exact $k$-leaf root of $G$.

Conversely, choose one representative $r_v\in I_v$ from each bag.  The induced subgraph of $G$ on $\{r_v:v\in V(H)\}$ is isomorphic to $H$.  Exact $k$-leaf powers are closed under induced subgraphs: in an exact root, take the minimal subtree spanning the selected leaves.  Its leaf set is exactly the selected set, and all distances between selected leaves are unchanged.  Therefore $H$ is an exact $k$-leaf power.
\end{proof}

\begin{corollary}[blown-up skeleton classes]\label{cor:blown-up-skeletons}
The following hold for arbitrary nonempty stable blow-ups.
\begin{enumerate}[label=\textup{(\roman*)}]
\item A stable blow-up of a chordless cycle $C_\ell$ is an exact $5$-leaf power if and only if $\ell$ is even.
\item A stable blow-up of the ladder $L_t$ is an exact $5$-leaf power if and only if $t\le2$.
\item A stable blow-up of a cactus $F$ is an exact $5$-leaf power if and only if $F$ is bipartite.
\item For every $k\ge3$, a stable blow-up of a complete multipartite graph $K_{n_1,\ldots,n_s}$ is an exact $k$-leaf power if and only if $k$ is even or $s\le2$.
\item If $H$ is block-complete multipartite, then a stable blow-up of $H$ is an exact $5$-leaf power if and only if $H$ is bipartite.
\end{enumerate}
\end{corollary}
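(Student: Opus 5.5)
The plan is to deduce every item from \Cref{thm:stable-blow-up}. With $k\ge3$, that theorem says a stable blow-up $G$ of $H$ is an exact $k$-leaf power exactly when $H$ is. So in each item it suffices to recall the exact-$k$ classification of the skeleton $H$ already proved in the paper or cited from \cite{BLR2010}. No new support-tree construction is needed. The one point to watch is that each item's condition is stated in terms of the skeleton, not the blow-up, and that is precisely what the theorem requires.

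For (ii), (iv) and (v) the skeleton classifications are already in hand. In (ii), \Cref{thm:ladder} gives that $L_t$ is exact $5$ if and only if $t\le2$. In (iv), \Cref{thm:complete-multipartite-all-k} gives the condition ``$k$ even or $s\le2$'' for every $k\ge3$; here the blow-up is itself complete multipartite, so that theorem could also be applied directly. In (v), \Cref{thm:block-complete-multipartite} shows that a block-complete multipartite $H$ is exact $5$ if and only if it is bipartite. That theorem assumes $H$ is connected, so if $H$ is disconnected I would first apply \Cref{lem:components-k} and use the fact that $H$ is bipartite exactly when each component is.

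For (iii), \Cref{cor:cacti} handles a connected cactus: it is exact $5$ if and only if it is bipartite. A disconnected cactus reduces to its components by \Cref{lem:components-k} in the same way as in (v).

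For (i), I treat necessity and sufficiency separately. Necessity: odd $\ell$ gives a non-bipartite graph, which cannot be exact $5$ because exact odd-leaf powers are bipartite. Sufficiency: for even $\ell$, \Cref{cor:max-degree-two} covers $C_\ell$, using the cycle block-language theorem for $\ell\ge8$ and \cite[Lemma~6]{BLR2010} for $C_4$ and $C_6$. The main obstacle I expect is bookkeeping in this item. The claim for $\ell\ge8$ rests on the later cycle theorem, so that forward dependence should be cited explicitly. The degenerate case $\ell=3$ is odd and is excluded by the same bipartiteness argument.
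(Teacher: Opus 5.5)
Your proposal is correct and matches the paper's proof: each item comes from feeding the corresponding skeleton classification (the cycle theorem together with $C_4$ and $C_6$; \Cref{thm:ladder}; \Cref{cor:cacti}; \Cref{thm:complete-multipartite-all-k}; \Cref{thm:block-complete-multipartite}) into \Cref{thm:stable-blow-up}. Your explicit parity argument for odd $\ell$ in (i) fills in what the paper leaves implicit, and the component reduction in (v) is harmless but unnecessary, since the paper's definition of block-complete multipartite graphs already requires connectedness.
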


\begin{proof}
Apply \Cref{thm:stable-blow-up} to the corresponding skeleton theorem: the cycle block-language theorem and the finite cases $C_4,C_6$ for (i), \Cref{thm:ladder} for (ii), \Cref{cor:cacti} for (iii), \Cref{thm:complete-multipartite-all-k} for (iv), and \Cref{thm:block-complete-multipartite} for (v).
\end{proof}

Odd exact distances impose strong restrictions on graph classes that are close to chordal.  The next statements are useful boundary cases: in chordal classes only forests survive, while in cographs the only obstruction is non-bipartiteness.

\begin{theorem}[chordal graphs at odd exact distances]\label{thm:chordal-odd}
Let $k\ge3$ be odd, and let $G$ be a chordal graph.  Then
\[
 G\text{ is an exact }k\text{-leaf power}
 \quad\Longleftrightarrow\quad
 G\text{ is a forest}.
\]
\end{theorem}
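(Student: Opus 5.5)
The plan is to prove the two directions separately, each by appeal to results already established.

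\emph{Necessity.} Suppose $G$ is chordal and an exact $k$-leaf power with $k$ odd. By the parity observation used in \Cref{thm:complete-multipartite-all-k}, every exact odd-leaf power is bipartite: a bipartition of the root separates every pair of leaves at odd distance. So $G$ is bipartite and has no odd cycle, in particular no triangle. If $G$ had a cycle, it would have a shortest cycle, which is chordless. Chordality forces a chordless cycle to be a triangle, and bipartiteness excludes triangles. Hence $G$ is acyclic, that is, a forest. This direction is routine.

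\emph{Sufficiency.} Let $G$ be a forest and handle each tree component first. For a tree $F$, attach one new leaf $\ell_v$ to each vertex $v$ of $F$, exactly as in the proof of \Cref{cor:larger-distance-subclasses}(iv). Then $\dist(\ell_u,\ell_v)=\dist_F(u,v)+2$, so distance $3$ occurs precisely for the edges of $F$, giving an exact $3$-leaf root.

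Two degenerate cases need care. For a single-vertex component, take the one-vertex tree. For $K_2$, attaching leaves to both vertices gives the path of length three, which is fine. More generally, the original vertices of $F$ are all internal after the attachment, so the leaf set of the new tree is exactly $\{\ell_v\}$, as required.

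Next, apply \Cref{lem:distance-lifting} repeatedly to lift the exact $3$-leaf root to an exact $k$-leaf root for every odd $k\ge3$. Finally, combine the components with \Cref{lem:components-k}, which allows isolated vertices.

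No step is a real obstacle. The only point needing care is the leaf-set bookkeeping in the degenerate small cases and the isolated-vertex components; \Cref{lem:components-k} already handles the latter.
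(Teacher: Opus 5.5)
Your proposal is correct and follows essentially the same route as the paper. For necessity, both use the parity bipartition of the root together with the observation that a shortest cycle is chordless. For sufficiency, both use the pendant-leaf exact-$3$ root, lift it by subdividing leaf edges (\Cref{lem:distance-lifting}), and assemble the components with \Cref{lem:components-k}. Your separate handling of single-vertex components is slightly more careful than the paper's wording, but it is not needed, since \Cref{lem:components-k} already covers isolated vertices.
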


\begin{proof}
If $G$ is an exact $k$-leaf power with $k$ odd, then $G$ is bipartite: a bipartition of the root tree separates all leaf pairs at odd distance.  A chordal bipartite graph is acyclic, since a shortest cycle in any graph is induced and a bipartite cycle has length at least four.  Thus $G$ is a forest.

Conversely, every tree is an exact $3$-leaf power: attach one new leaf to every vertex of the tree.  By \Cref{lem:components-k}, forests are handled componentwise.  Repeatedly subdividing every edge incident with a graph leaf once raises the exact distance by two and preserves the represented graph.  Hence every forest is an exact $k$-leaf power for every odd $k\ge3$.
\end{proof}

\begin{corollary}[standard chordal subclasses]\label{cor:standard-chordal-subclasses}
Let $k\ge3$ be odd.  In each of the following classes, exact $k$-leaf powers are exactly the forests:
\[
 \text{interval graphs},\qquad
 \text{proper interval graphs},\qquad
 \text{split graphs},\qquad
 \text{threshold graphs}.
\]
\end{corollary}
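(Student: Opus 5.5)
This corollary follows from \Cref{thm:chordal-odd}: I will show that each of the four listed classes is a subclass of chordal graphs and that each contains every forest. Then, for a graph $G$ in any of these classes, \Cref{thm:chordal-odd} says that $G$ is an exact $k$-leaf power for odd $k\ge3$ exactly when $G$ is a forest. So within the class the exact $k$-leaf powers are precisely the forests lying in that class. The inclusions of interval, proper interval, split and threshold graphs into chordal graphs are standard. Threshold graphs are split, split graphs are chordal because a chordless cycle of length at least four would need two nonadjacent vertices in the clique part or an edge inside the stable part, and proper interval graphs are interval graphs, which are chordal. I would cite these facts briefly.

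The statement should be read as: a member of the class is an exact $k$-leaf power if and only if it is a forest. I do not need to claim that every forest belongs to each class. Indeed, not every tree is an interval graph, so ``exactly the forests'' must mean the forests that lie in the class, and I will state this reading explicitly in the proof. Under this reading, the backward direction is also immediate from \Cref{thm:chordal-odd}: any forest in the class is an exact $k$-leaf power.

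The only step that needs any care is the inclusion of split graphs in chordal graphs, and it is short. Suppose a split graph with partition $K\cup S$ contained an induced cycle $C$ of length $\ell\ge4$. $C$ meets $K$ in at most two vertices, and these are consecutive on $C$, since $K$ is a clique and $C$ is induced. $C$ meets $S$ in independent vertices. So at least $\ell-2\ge2$ vertices of $C$ lie in $S$, and some two of them are consecutive on $C$, which contradicts the independence of $S$. Nothing else is a real obstacle; the proof is a direct specialization of the previous theorem.
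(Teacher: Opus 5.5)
Your proposal is correct and follows the paper's argument: the corollary is \Cref{thm:chordal-odd} restricted to four classes of chordal graphs, and reading ``exactly the forests'' as ``exactly the forests lying in the class'' is the right interpretation. Delete the opening promise to show that each class contains every forest. It is false (the claw is not a proper interval graph, $P_4$ is not a threshold graph, and $P_5$ is not split), and, as your second paragraph notes, it is never needed.
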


\begin{proof}
All four classes are subclasses of chordal graphs, so the result follows from \Cref{thm:chordal-odd}.
\end{proof}

\begin{theorem}[cographs at odd exact distances]\label{thm:cograph-odd}
Let $k\ge3$ be odd, and let $G$ be a cograph.  Then
\[
 G\text{ is an exact }k\text{-leaf power}
 \quad\Longleftrightarrow\quad
 G\text{ is bipartite}.
\]
\end{theorem}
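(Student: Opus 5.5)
Necessity is the parity argument already used for \Cref{thm:chordal-odd}. A bipartition of an exact $k$-leaf root, $k$ odd, separates every pair of leaves at odd distance. Hence every exact $k$-leaf power with $k$ odd is bipartite.

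For sufficiency, the plan is to identify the bipartite cographs and then quote earlier constructions. A cograph is $P_4$-free, so each connected component of a bipartite cograph is $P_4$-free, connected and bipartite. The key claim is that such a component is either a single vertex or a complete bipartite graph $K_{X,Y}$. I would prove this with the cotree decomposition: a connected cograph on at least two vertices is the join of two or more nonempty cographs $G_1\vee\cdots\vee G_r$. If $r\ge3$, picking one vertex from each of three join factors gives a triangle, contradicting bipartiteness. So $r=2$.

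If either factor, say $G_1$, contained an edge $ab$, then any vertex $c$ of $G_2$ would form the triangle $abc$. Hence both factors are edgeless, and the component is $K_{V(G_1),V(G_2)}$. As a cross-check avoiding cotrees, a direct argument also works. Take a connected bipartite graph with parts $X,Y$ that is not complete bipartite, so some $x\in X$, $y\in Y$ are nonadjacent. A shortest $x$–$y$ path has odd length at least $3$, and its first four vertices form an induced $P_4$, because bipartiteness forbids the remaining chords. This contradicts $P_4$-freeness.

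The constructive step then uses earlier results. Complete bipartite graphs, including $K_{1,m}$ and $K_2$, are exact $k$-leaf powers for every $k\ge3$ by \Cref{cor:larger-distance-subclasses}(i), equivalently by \Cref{thm:complete-multipartite-all-k} with $s\le2$. An isolated vertex is trivially representable by the one-vertex tree. \Cref{lem:components-k} then assembles the components into an exact $k$-leaf root of $G$.

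I do not expect a real obstacle. The only point needing care is the structural claim that connected bipartite cographs are complete bipartite. The shortest-path argument settles it cleanly, so I would present that version in the proof.
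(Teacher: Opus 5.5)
Your proof is correct and follows the paper's proof: the parity of the root gives necessity, the shortest-path argument shows that a connected bipartite $P_4$-free component must be complete bipartite, and then \Cref{thm:complete-multipartite-all-k} and \Cref{lem:components-k} finish. One small wording fix: among the first four vertices $v_0v_1v_2v_3$ of that shortest path, bipartiteness only rules out the chords $v_0v_2$ and $v_1v_3$, while $v_0v_3$ is excluded because a shortest path is induced (or, when the path has length three, because $xy\notin E(G)$).
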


\begin{proof}
The forward implication is again the parity bipartition of the exact root.

For the converse, assume that $G$ is a bipartite cograph.  It is enough to consider a connected component $C$ with at least two vertices.  A connected cograph is $P_4$-free.  If $C$ had a nonedge $xy$ across its bipartition, then every shortest $x$--$y$ path would be induced and would have odd length at least three.  Its first four vertices would induce a $P_4$, a contradiction.  Therefore every pair of vertices in opposite bipartition classes is adjacent, and $C$ is complete bipartite.  Complete bipartite graphs are exact $k$-leaf powers for every odd $k\ge3$ by \Cref{thm:complete-multipartite-all-k}.  The disconnected case follows from \Cref{lem:components-k}.
\end{proof}

\begin{remark}[where to go beyond these subclasses]
The chordal and cograph theorems are deliberately sharp boundary statements.  They should not be confused with a classification of all bipartite $P_5$-free or all chain graphs: those classes allow nested-neighbourhood blocks that are neither complete bipartite nor forest-like, and their exact $5$ behaviour is a separate problem.  The full compatibility theorem for the even boundary $k=6$ in block-complete multipartite decompositions and the exact $5$ classification of theta graphs are natural next test cases.
\end{remark}

\section{Chordless cycles: an explicit terminal block language}\label{sec:cycle-correction}

This section contains the paper's most detailed root-classification theorem.  We use a word language rather than a purely pictorial list of tree pieces.  The reason is structural: once the heavy spine is fixed, the exact-distance condition becomes the local equation
\[
 r+s+|i-j|=5,
\]
and endpoint configurations are recorded explicitly.  Thus completeness is proved by a finite transition argument rather than by a separate enumeration of small drawings.

We give a finite block-language classification for exact $5$-leaf roots of chordless cycles of length at least eight.  The notation below separates the normal $23$-terminal language
\[
  23\,\mathcal B^{*}\,\widehat{\mathcal B}
\]
from the complementary terminal-$123$ language.  The latter consists of the cases in which endpoint type $123$ occurs at one or both ends of the heavy spine.  Thus the result is not merely an admissibility test of the form ``a word is valid if its distance graph is a cycle''.  It is a finite explicit block language, and the two terminal sublanguages together account for all endpoint configurations.

\subsection*{Spine words}

For a finite sequence
\[
 W=A_0|A_1|\cdots|A_m,
 \qquad A_i\subseteq\{1,2,3\},
\]
define a tree $T(W)$ as follows.  Take a spine path
\[
 x_0x_1\cdots x_m.
\]
For every $r\in A_i$, attach at $x_i$ a pendant path of length $r$ and denote its leaf by $z_{i,r}$.  Thus
\[
 L(T(W))=\{z_{i,r}:0\le i\le m,\ r\in A_i\}.
\]
The number of leaves is
\[
 \lambda(W)=\sum_{i=0}^{m}|A_i|.
\]
For two leaves of $T(W)$ one has
\begin{equation}\label{eq:cycle-distance-word}
 d_{T(W)}(z_{i,r},z_{j,s})=r+s+|i-j|.
\end{equation}
Hence the exact $5$-leaf graph of $T(W)$ has vertex set
\[
 \Omega(W)=\{(i,r):r\in A_i\}
\]
and adjacency rule
\begin{equation}\label{eq:cycle-adj-word}
 (i,r)(j,s)\in E\quad\Longleftrightarrow\quad r+s+|i-j|=5.
\end{equation}

We use the abbreviations
\[
0=\varnothing,\quad
1=\{1\},\quad
12=\{1,2\},\quad
13=\{1,3\},\quad
23=\{2,3\},\quad
123=\{1,2,3\}.
\]
The symbols $2$, $3$ and the internal symbols $23,123$ will not occur in the canonical cycle words below.

If positions outside the word are interpreted as empty, then \eqref{eq:cycle-adj-word} says that every leaf has degree two exactly when the following three local equations hold at every position $i$ at which the indicated length occurs:
\begin{align}
1\in A_i:
&\quad
\mathbf 1_{3\in A_{i-1}}+\mathbf 1_{3\in A_{i+1}}
+\mathbf 1_{2\in A_{i-2}}+\mathbf 1_{2\in A_{i+2}}
+\mathbf 1_{1\in A_{i-3}}+\mathbf 1_{1\in A_{i+3}}=2,
\tag{D1}\label{eq:deg-one}\\
2\in A_i:
&\quad
\mathbf 1_{3\in A_i}
+\mathbf 1_{2\in A_{i-1}}+\mathbf 1_{2\in A_{i+1}}
+\mathbf 1_{1\in A_{i-2}}+\mathbf 1_{1\in A_{i+2}}=2,
\tag{D2}\label{eq:deg-two}\\
3\in A_i:
&\quad
\mathbf 1_{2\in A_i}
+\mathbf 1_{1\in A_{i-1}}+\mathbf 1_{1\in A_{i+1}}=2.
\tag{D3}\label{eq:deg-three}
\end{align}
These equations are simply the list of all solutions of $r+s+|i-j|=5$ for $r\in\{1,2,3\}$.

\subsection*{The block language}

Define the five normal return blocks
\[
\begin{array}{rclcrcl}
B_2&=&1|1|0,&&
B_3&=&12|0,\\[1mm]
B_4&=&1|1|1|1|12|1|1|1|1|0,&&
B_5&=&1|13|1|0,\\[1mm]
B_6&=&1|1|1|13|1|1|1|0.&&&
\end{array}
\]
Let
\[
 \mathcal B=\{B_2,B_3,B_4,B_5,B_6\}.
\]
Whenever a block $U$ ends in $0$, write $\widehat U$ for the word obtained from $U$ by replacing its final $0$ by $23$, and write
\[
 \widehat{\mathcal U}=\{\widehat U:U\in\mathcal U\}.
\]
The normal language is
\begin{equation}\label{eq:normal-cycle-language}
 \mathcal N=23\,\mathcal B^{*}\,\widehat{\mathcal B}.
\end{equation}
This is the Fig.~15-type normal language of \cite{BLR2010} in spine-word notation.

The terminal-$123$ roots are generated by the following terminal-$123$ blocks:
\[
\begin{array}{rclcrcl}
L_1&=&123|13|1|1|1|0,&&
L_2&=&123|1|12|1|1|1|1|0,\\[1mm]
R_1&=&1|1|1|13|123,&&
R_2&=&1|1|1|1|12|1|123,
\end{array}
\]
and by the two short double-terminal bridges
\[
D_1=123|13|123,
\qquad
D_2=123|1|12|1|123.
\]
Put
\[
 \mathcal L=\{L_1,L_2\},
 \qquad
 \mathcal R=\{R_1,R_2\},
 \qquad
 \mathcal D=\{D_1,D_2\}.
\]
The terminal-$123$ language is
\begin{equation}\label{eq:exceptional-cycle-language}
\mathcal X=
\widehat{\mathcal L}
\cup
\mathcal L\mathcal B^{*}\widehat{\mathcal B}
\cup
23\mathcal B^{*}\mathcal R
\cup
\mathcal D
\cup
\mathcal L\mathcal B^{*}\mathcal R.
\end{equation}
Thus $\widehat{\mathcal L}$ consists of the one-sided terminal-$123$ words which end normally, $23\mathcal B^{*}\mathcal R$ consists of the right terminal-$123$ words, $\mathcal L\mathcal B^{*}\widehat{\mathcal B}$ consists of the left terminal-$123$ words with a normal right end, $\mathcal D$ consists of the two short words with both ends of type $123$, and $\mathcal L\mathcal B^{*}\mathcal R$ consists of the long words with both ends of type $123$.

\begin{theorem}[explicit block language for chordless cycles]\label{thm:cycle-block-language}
Let $l\ge8$ be even.  A tree $T$ is an exact $5$-leaf root of $C_l$ if and only if, after possibly reversing the spine,
\[
 T\cong T(W)
\]
for some word
\[
 W\in\mathcal N\cup\mathcal X
\]
with $\lambda(W)=l$.
Equivalently, the words in \eqref{eq:normal-cycle-language} are precisely the normal $23$-terminal block roots, and the words in \eqref{eq:exceptional-cycle-language} are precisely the terminal-$123$ block roots.
\end{theorem}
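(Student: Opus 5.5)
The proof has two directions of very different difficulty: soundness, that every listed word gives a root of $C_l$, and completeness, that every root is of this form. For soundness I would use \eqref{eq:cycle-adj-word}, which makes the exact $5$-leaf graph of $T(W)$ depend only on local data: position differences at most $4$ and lengths in $\{1,2,3\}$.

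\emph{Soundness.} Each block is checked once for the degree equations (D1)--(D3), at interior positions and across every possible junction with a neighbouring block. Every block ends in $0$, and every block other than $B_3$ begins with $1$ (while $B_3$ begins with $12$). Because of this, the interaction across a junction involves only a window of bounded width, and there are only finitely many junction types: $23|B$, $B|B'$, $B|\widehat{B'}$, $L|B$, $B|R$, together with the words in $\mathcal D$. For each, one verifies that every leaf has degree exactly two. Next, connectivity follows by tracing the degree-two graph as a walk. Inside each block the $1$-leaves pair off, so each block $B$ contributes a fixed number of vertices and edges to two "strands". The terminal pieces $23$, $\widehat{\cdot}$, $123$, $L$ and $R$ join the two strands at the ends, so the whole graph is a single cycle. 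Since each leaf is a vertex, its length is $\lambda(W)=l$. Finally, the tree $T(W)$ is a genuine root: all its leaves are graph leaves. This holds provided the spine ends carry pendant paths, which is exactly why the words begin and end with nonempty terminal sets.

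\emph{Completeness.} Let $T$ be an exact $5$-leaf root of $C_l$ with $l\ge8$, and let $R$ be its support tree as in \Cref{thm:support-model}.

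Step 1: show that $T$ is a caterpillar-like tree $T(W)$. Every internal vertex of $T$ lies within distance $3$ of a "heavy spine", and pendant paths have length at most $3$ with no branching off them. I would argue this from the fact that $C_l$ contains no induced $L_3$ (\Cref{cor:square-chain-obstruction}), no $C_4$ for $l\ge8$, and no claw-type branching. For instance, a vertex off the spine with two leaf-descendants at different depths would force a false twin or a degree $\ge3$ vertex. The spine is the longest path between internal vertices that still carry leaves at depth $\le 3$. Two leaves attached through the same off-spine vertex at the same distance would be false twins, and false twins are impossible in $C_l$ for $l\ge 5$. This gives $A_i\subseteq\{1,2,3\}$ and shows the attachments are pendant paths.

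Step 2: once $T\cong T(W)$, the degree equations (D1)--(D3) must hold. Moreover, the graph must be connected and must contain no $4$-cycle. This reduces the problem to a finite-state transition system: the state is the window $(A_{i-3},\dots,A_{i+3})$.

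Step 3: classify the endpoint configurations. At the left end, (D3) and (D2) restrict $A_0$ to $23$ or $123$. (The single symbols $2$, $3$ give leaves whose degree equations fail, or produce short cycles; $13$ and $12$ at the very end fail similarly.) Propagating rightwards from $23$, the constraints determine the word up to a choice at each "return" point, which corresponds to the block boundaries ending in $0$. From $123$ they force the prefixes $L_1$ or $L_2$.

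Step 4: show that between returns the only solutions are exactly $B_2,\dots,B_6$. Any other continuation either violates a degree equation or closes a short cycle, making the graph disconnected or a cycle of length less than $l$. The right end is analysed in the same way, reading the word reversed, giving $\widehat{\mathcal B}$ or $\mathcal R$. The short cases where both ends are of type $123$ within a bounded length give $\mathcal D$.

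The main obstacle is Step 1 together with the exhaustive transition analysis of Step 4. My first attempt would be to enumerate, by computer-verifiable case analysis on windows of width $7$, the reachable states from each terminal state. I would then show that the transition graph's cycles pass through the single "reset" state at a $0$, which yields the block decomposition $\mathcal B^*$.
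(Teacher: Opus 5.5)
\textbf{The gap is Step~1.} The paper does not derive the caterpillar shape from local facts about $C_l$. It imports the separation statement of Brandstaedt--Le--Rautenbach (their Lemma~7(i)): for every vertex $u$ of $T$, at most two components of $T-u$ contain two or more leaves. It then takes as spine the set of \emph{heavy} edges, meaning edges $e$ with at least two leaves on each side of $T-e$. The separation statement makes these edges a path $P$. Every component of $T-V(P)$ then has exactly one leaf, since otherwise its attaching edge would be heavy. Finally, $|A_0|,|A_m|\ge2$ holds automatically, and this is what makes $W$ canonical and unique up to reversal.

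Your proposed ingredients cannot replace this lemma. The facts you cite hold equally for $C_6$: no induced $L_3$ (vacuous at maximum degree two), no $C_4$, no claw, no false twins. Yet $C_6\cong K_{3,3}-M$ has the following exact $5$-root: a centre $u$ with three neighbours $w_1,w_2,w_3$, each carrying pendant paths of lengths $1$ and $2$. This tree has three heavy edges at $u$ and is not of the form $T(W)$. Whichever path you call the spine, some $w_i$ is off-spine with leaf-descendants at two different depths, yet there is no false twin and no degree-three vertex. So Step~1 must genuinely use $l\ge8$. The missing idea is exactly the separation lemma together with the heavy-edge definition of the spine.

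Your loose spine definition also costs canonicity. Contrary to your Step~3, a singleton end symbol $2$ does not violate (D1)--(D3). For example, $2|2|1|1|0|1|1|23$ satisfies them and is merely a non-canonical encoding of the same tree as $23|1|1|0|1|1|23$. With heavy edges, singleton ends are excluded by definition, not by the degree equations.

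Once Step~1 is supplied, the rest of your plan follows the paper. Soundness comes from finitely many junction checks plus the two-strand connectivity argument. Completeness comes from a first-return analysis between the boundary symbols $0$, $23$, $123$, which is the paper's table (T).

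One difference is in your favour, and you should keep it. You retain connectivity and $C_4$-freeness as constraints in the transition system, whereas the paper asserts that (D1)--(D3) alone force table (T). They do not. The canonical word $23|1|1|1|13|12|0|1|0|1|0|12|13|1|1|1|23$ satisfies (D1)--(D3) at every leaf, but its first return $1|1|1|13|12|0$ is not in the table, and its exact $5$-leaf graph is $2C_{10}$. Likewise, $23|1|1|23|0|23|1|1|23$ satisfies the degree equations with internal $23$'s and gives $2C_6$. Such words are excluded by connectivity, not by the degree equations, so your enumeration must prune on connectivity explicitly.
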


\begin{proof}
We first recall the standard spine reduction for exact $5$-roots of long chordless cycles.  Let $T$ be an exact $5$-leaf root of $C_l$, $l\ge8$.  For every vertex $u$ of $T$, at most two components of $T-u$ contain at least two leaves; this is the separation statement proved in Lemma~7(i) of \cite{BLR2010}.  Call an edge heavy if both components of $T-e$ contain at least two leaves.  The heavy edges form a nonempty path $P=x_0x_1\cdots x_m$: they are connected because every edge between two heavy edges is heavy, and no three of them can meet at a vertex by the separation statement.  Every component of $T-V(P)$ contains exactly one leaf, otherwise the edge joining that component to $P$ would be heavy.  Hence $T$ is obtained from $P$ by attaching pendant leaf paths.  Since every vertex of $C_l$ has degree two, no pendant leaf path can have length at least four: length at least five gives no exact-$5$ neighbour, and length four gives at most the unique possible length-one neighbour at the same spine vertex.  Also, two pendant paths of the same length at the same spine vertex would give false twins in $C_l$.  Therefore $T=T(W)$ for a unique spine word $W=A_0|\cdots|A_m$ with $A_i\subseteq\{1,2,3\}$, up to reversal, and $|A_0|,|A_m|\ge2$.

The rest of the proof is the explicit finite forcing calculation for such words.  The degree-two conditions are exactly \eqref{eq:deg-one}--\eqref{eq:deg-three}.  Applying them at the two ends and then at the first unresolved internal position gives two immediate restrictions: the only possible end symbols are $23$ and $123$, and the only possible internal symbols are
\[
 0,
 \quad 1,
 \quad 12,
 \quad 13.
\]
Indeed, an end symbol $13$ violates \eqref{eq:deg-three}, while an end symbol $12$ forces successively a right-neighbour containing $2$ and a second-neighbour containing $1$, and then violates \eqref{eq:deg-two} at the first internal $2$-leaf.  Similarly, an internal symbol containing $2$ but not $1$ or containing $3$ without $1$ violates \eqref{eq:deg-two} or \eqref{eq:deg-three} after at most two further applications of the same equations; an internal symbol $23$ or $123$ would create a second heavy endpoint, contradicting the maximality of the chosen heavy path.  Thus the displayed alphabet is canonical.

Now mark the boundary symbols
\[
 0,
 \quad 23,
 \quad 123.
\]
Starting at a boundary symbol and reading to the next boundary symbol, the equations \eqref{eq:deg-one}--\eqref{eq:deg-three} force exactly the following first-return table.  The entry in row $\sigma$ and column $\tau$ lists all possible words following $\sigma$ up to and including the next boundary symbol $\tau$.
\[
\begin{array}{c|c|l}
\text{left boundary }\sigma & \text{next boundary }\tau & \text{possible first-return words}\\
\hline
0\text{ or }23 & 0
 & B_2,
 B_3,
 B_4,
 B_5,
 B_6\\
0\text{ or }23 & 23
 & \widehat B_2,
 \widehat B_3,
 \widehat B_4,
 \widehat B_5,
 \widehat B_6\\
0\text{ or }23 & 123
 & R_1,
 R_2\\
123 & 0
 & L_1,
 L_2\\
123 & 23
 & \widehat L_1,
 \widehat L_2\\
123 & 123
 & D_1,
 D_2.
\end{array}
\tag{T}
\]
We spell out why this table is exhaustive.  Suppose first that the left boundary is $23$ or $0$.  If the next symbol contains $2$, then \eqref{eq:deg-two} and \eqref{eq:deg-one} force the branch $12|0$ or its terminal version $12|23$.  If the next symbol is $13$, then \eqref{eq:deg-three} cannot be satisfied.  Thus the only remaining possibility begins with $1$.  The next application of \eqref{eq:deg-one} gives either $1|1$, $1|13$, or the terminal continuation that later becomes $R_1$ or $R_2$.  The branch $1|13$ is forced to $1|13|1$ and then must stop at $0$ or $23$, giving $B_5$ or $\widehat B_5$.  The branch $1|1$ either stops as $1|1|0$ or $1|1|23$, giving $B_2$ or $\widehat B_2$, or continues with another $1$.  Continuing the equations from $1|1|1$ gives exactly two longer branches: the $13$-branch
\[
 1|1|1|13|1|1|1|0,
 \quad
 1|1|1|13|1|1|1|23,
 \quad
 1|1|1|13|123,
\]
which gives $B_6$, $\widehat B_6$ and $R_1$, and the $12$-branch
\[
 1|1|1|1|12|1|1|1|1|0,
 \quad
 1|1|1|1|12|1|1|1|1|23,
 \quad
 1|1|1|1|12|1|123,
\]
which gives $B_4$, $\widehat B_4$ and $R_2$.  Every other continuation makes one of \eqref{eq:deg-one}--\eqref{eq:deg-three} equal to $0$, $1$, or at least $3$ before the next boundary is reached.

If the left boundary is $123$, then \eqref{eq:deg-three} forces the next symbol to contain $1$.  If the next symbol is $13$, the equations force either the short terminal branch
\[
 13|123
\]
or the branch
\[
 13|1|1|1|0
\]
and its terminal normal variant
\[
 13|1|1|1|23.
\]
These are $D_1$, $L_1$ and $\widehat L_1$.  If the next symbol is $1$, then \eqref{eq:deg-two} at the left endpoint forces a following $12$, and the equations force either
\[
 1|12|1|123
\]
or
\[
 1|12|1|1|1|1|0
\]
and its terminal normal variant
\[
 1|12|1|1|1|1|23.
\]
These are $D_2$, $L_2$ and $\widehat L_2$.  No other first symbol after $123$ satisfies the end equations.  This proves the table.

We now prove necessity.  The first boundary of a cycle word is $23$ or $123$, and the last boundary is again $23$ or $123$.  If the first boundary is $23$, repeatedly applying the first three rows of table (T) shows that the word is either
\[
 23\mathcal B^{*}\widehat{\mathcal B}
\]
or
\[
 23\mathcal B^{*}\mathcal R.
\]
The first alternative is the normal language $\mathcal N$, and the second is the right terminal-$123$ part of $\mathcal X$.  If the first boundary is $123$, then the last three rows of (T) give three possibilities.  The word may end immediately at $23$, giving $\widehat{\mathcal L}$; it may end immediately at $123$, giving $\mathcal D$; or it may first pass through a $0$ via a block in $\mathcal L$, after which the first three rows give either $\mathcal B^{*}\widehat{\mathcal B}$ or $\mathcal B^{*}\mathcal R$.  These are respectively
\[
 \mathcal L\mathcal B^{*}\widehat{\mathcal B}
 \quad\text{and}\quad
 \mathcal L\mathcal B^{*}\mathcal R.
\]
Thus every exact $5$-leaf root of $C_l$ gives a word in $\mathcal N\cup\mathcal X$.

Conversely, let $W\in\mathcal N\cup\mathcal X$ with $\lambda(W)=l$.  Table (T), read from left to right, is exactly the list of first-return words for which the equations \eqref{eq:deg-one}--\eqref{eq:deg-three} are satisfied.  Therefore every leaf of $T(W)$ has degree two in the exact $5$-leaf graph.  The same table also tracks the two cycle strands through each block: a normal return block sends the two strands entering the left boundary to the two strands leaving the right boundary; a terminal $23$ closes them normally; a terminal $123$ closes them through one of $R_1,R_2,D_1,D_2$; and the left blocks $L_1,L_2$ start the same two strands from the terminal-$123$ left endpoint.  Thus no proper subword closes a cycle by itself, and the exact $5$-leaf graph is connected.  It is therefore a connected $2$-regular graph on $\lambda(W)=l$ vertices, hence it is $C_l$.  By \eqref{eq:cycle-distance-word}, $T(W)$ is an exact $5$-leaf root of $C_l$.

The heavy path $P$ was intrinsically defined, so the spine word is unique up to reversing $P$.  This proves the theorem.
\end{proof}

\begin{example}[terminal-$123$ roots]
The shortest terminal-$123$ word is
\[
 D_1=123|13|123.
\]
It has eight leaves and its exact $5$-leaf graph is the cycle
\[
 z_{0,1},z_{1,3},z_{2,1},z_{0,2},z_{0,3},z_{1,1},z_{2,3},z_{2,2},z_{0,1}
\]
after cyclic relabelling.  The second short double-terminal bridge is
\[
 D_2=123|1|12|1|123,
\]
which gives a terminal-$123$ exact $5$-leaf root of $C_{10}$.  Longer terminal-$123$ roots are obtained by inserting any word from $\mathcal B^{*}$ between the left and right terminal-$123$ blocks, or by using only one terminal-$123$ block and ending normally on the other side.
\end{example}

\section{Concluding remarks}

The results of the paper give a structural picture of exact leaf powers on several concrete graph classes.  The most detailed root-level classification is the terminal block-language theorem for chordless cycles: all exact $5$-leaf roots of $C_l$, $l\ge8$, are described by the union of the normal terminal-$23$ language and the complementary terminal-$123$ language.  The remaining exact-distance-five theorems show that the same support-distance mechanisms also separate several distinct behaviours.  Sparse square chains are obstructed after two squares, yielding the ladder dichotomy and the induced square-chain obstruction.  Complete bipartite blocks behave in the opposite way: all induced squares inside one dense block are carried by a single support edge, and block-complete multipartite graphs are exact $5$ precisely in the bipartite case.  Bipartite co-cluster graphs show that dense support gadgets also represent structured missing-edge patterns such as crowns.

The larger-distance results show that the dense-block picture is controlled by parity.  Complete multipartite graphs are exact $k$ exactly when $k$ is even or the graph is bipartite.  For block-complete multipartite graphs, odd $k\ge5$ again gives the bipartite members, whereas even $k\ge8$ admits all complete multipartite blocks.  The remaining boundary $k=6$ is genuinely different: short same-part shadows may collide through cutvertices.  The orientation theorem and the exact-six fan theorem isolate the first positive and negative compatibility phenomena at this boundary.

Two problems remain particularly natural.  First, find the full centre-pairing compatibility condition for exact $6$-leaf representability of block-complete multipartite graphs.  Second, extend the explicit exact-$5$ root-language classification beyond cycles and cacti to theta graphs or larger outerplanar subclasses.  Both directions preserve the support-tree viewpoint of this paper while moving beyond the named subclasses treated here.

\section*{Acknowledgements}
This work was supported by the Young Scientists Fund of the National Natural Science Foundation of China (No. 20201191803).

\section*{Conflict of interest}
The authors declare no conflict of interest.

\section*{Declaration on AI-assisted technologies}
During manuscript preparation, the authors used OpenAI's ChatGPT for language polishing, organization of LaTeX drafts, and assistance with illustrative TikZ code.  The authors reviewed and edited the content and take full responsibility for the manuscript.

\end{document}